\documentclass[11pt, reqno, a4paper]{amsart}
\usepackage[
    textwidth=6.2in,
    textheight=9in,
    centering
]{geometry}
\usepackage[T1]{fontenc}
\usepackage{lmodern}
\usepackage{microtype}
\usepackage{amsmath,amssymb,amsfonts,amsthm}
\usepackage{mathtools} 
\usepackage{mathrsfs}
\usepackage{graphicx}
\usepackage{enumitem}
\usepackage{comment}

\usepackage{hyperref}
\usepackage{cleveref}
\numberwithin{equation}{section}
\allowdisplaybreaks

\newtheorem{innercustomthm}{Theorem}

\newenvironment{customthm}[1]
{%
  \renewcommand\theinnercustomthm{#1}%
  \begin{innercustomthm}
}
{%
  \end{innercustomthm}
}

\theoremstyle{definition}
\newtheorem{definition}{Definition}[section]
\newtheorem{example}[definition]{Example}

\theoremstyle{remark}
\newtheorem{remark}[definition]{Remark}

\theoremstyle{plain}
\newtheorem{theorem}[definition]{Theorem}
\newtheorem{result}[definition]{Result}
\newtheorem{lemma}[definition]{Lemma}
\newtheorem{proposition}[definition]{Proposition}
\newtheorem{corollary}[definition]{Corollary}

\newtheorem{question}[definition]{Question}

\usepackage{color}

\newcommand{\harm}{\omega({\gamma}, K)}
\newcommand{\harmz}{\omega(0,K)}
\newcommand{\harminfty}{\omega(\infty, K)}
\newcommand{\harma}{\omega(a, K)}
\newcommand{\harmad}{\omega(a, {\Omega})}
\newcommand{\harmy}{\omega(y, K)}
\newcommand{\harmyi}{\omega(y_i, K)}
\newcommand{\harmF}{\omega({\gamma}, F)}

\newcommand{\capa}{c}
\newcommand{\equi}{\nu_K}
\newcommand{\conj}{\mathcal O}
\newcommand{\res}{\operatorname{Res}}
\newcommand{\mut}{\mu_{\textbf{\textit{t}}}}
\newcommand{\minalp}{\mathcal M_{\alpha, K}}

\newcommand{\sph}{\widehat{\mathbb{C}}}
\newcommand{\C}{\mathbb{C}} 
\newcommand{\R}{\mathbb{R}}
\newcommand{\Z}{\mathbb{Z}}
\newcommand{\N}{\mathbb{N}}
\newcommand{\Q}{\mathbb{Q}}

\begin{document}

\title[Arithmetic probability measures]{Arithmetic probability measures}

\author{Mayuresh Londhe}
\address{Faculty of Engineering and Natural Sciences, Sabancı University, İstanbul, Turkey}
\email{mayuresh.londhe@sabanciuniv.edu, mayureshwar99@gmail.com}

\begin{abstract}
We consider probability measures on compact subsets of the complex plane 
arising as limiting distributions of Galois conjugates of certain algebraic integers. 
These measures are characterized by infinitely many integral inequalities, 
one for each nonzero integer polynomial.
We study a broad family of measures given by particular convex combinations of harmonic and equilibrium measures.
We then show that, for measures in this family, it suffices to verify two 
conditions to guarantee all the required inequalities.
Under additional arithmetic constraints on the underlying compact sets, we further show 
that the infinite collection of inequalities is equivalent to checking 
the inequalities associated with two explicitly specified integer polynomials.
\end{abstract}

\keywords{distribution, Fekete--Szeg\H{o} theorem, capacity, algebraic integers, Galois conjugates, equilibrium measure, harmonic measure}
\subjclass[2020]{Primary: 11R06, 31A15; Secondary: 30C85}

\maketitle

\section{Introduction}\label{S:intro}
This paper studies probability measures of arithmetic origin
on the complex plane $\C$.
Given an algebraic number $\alpha$, let $\deg \alpha$ denote
the degree of its minimal polynomial over $\Q$, and let
$\conj(\alpha)$ denote the set of roots of this polynomial in $\C$, 
which we refer to as the Galois conjugates of $\alpha$.
Let $K \subset \C$ be a compact set, and 
denote by $\mathcal{P}(K)$ the set of Borel probability measures whose supports are contained in $K$.

\begin{definition}\label{D:APM}
A measure $\mu \in \mathcal{P}(K)$ is called an \emph{arithmetic probability measure}
if there exists a sequence $\{\alpha_n\}$ of
algebraic {\bf integers} satisfying the following conditions:
\begin{enumerate}[label=(\roman*)]
\item \label{I:deg} $\deg \alpha_n \to \infty$ as $n \to \infty$;
\item \label{I:neighbour} For every neighborhood $U$ of $K$, there exists $N \in \N$ such that
$\conj(\alpha_n)\subset U$ for all $n\geq N$;
\item \label{I:weak} The normalized counting measure on $\conj(\alpha_n)$ converges to $\mu$ 
in the weak* topology as $n \to \infty$, i.e.,
for every bounded continuous function $f: \C \to \R$,
\[
\lim_{n \to \infty} \frac{1}{\deg \alpha_n} \sum_{\gamma \in \conj(\alpha_n)} f(\gamma)= \int f(z) \, d \mu (z).
\]
\end{enumerate}
\end{definition}
\noindent{Since} $\conj(\alpha)$ is invariant under complex conjugation for every algebraic number $\alpha$,
arithmetic probability measures are symmetric with respect to the real axis.
We therefore restrict our attention to compact sets $K$ that are symmetric with respect to the real axis.
\smallskip

Potential theory provides a natural framework for studying arithmetic probability measures.
Classical results of Fekete and Szeg\H{o} \cite{Fekete:udvwbgmg23, FekeSzego:aeicwr55} 
show that a sequence of algebraic integers satisfying conditions \ref{I:deg} 
and \ref{I:neighbour} exists if and only if 
the logarithmic capacity $\capa(K)$ of $K$ satisfies $\capa(K) \geq 1$.
Logarithmic capacity is an analytic notion of size for compact sets; for example, 
a circle of radius $r$ has capacity $r$, while a line segment of length $l$ has capacity $l/4$.
We defer the relevant potential-theoretic definitions to Section~\ref{S:prelim}.
In particular, if $\capa(K) < 1$, then $K$ admits no arithmetic probability measures. 
If $\capa(K) =1$, there is exactly one arithmetic probability measure, namely the equilibrium measure of $K$
(see Remark~\ref{rem:negativepotential}).
When $\capa(K) > 1$, the situation is markedly different: 
$K$ admits infinitely many arithmetic probability measures.
Indeed, the equilibrium measures of symmetric compact subsets of $K$ with logarithmic capacity 
one already yield infinitely many such measures.
\smallskip

Although the logarithmic capacity of $K$ determines whether arithmetic probability measures exist, 
characterizing these measures presents a subtler problem.
Until recently, a complete characterization remained unknown.
We refer the reader to \cite{Serre19:dadvpef, T:serre19, NT:cdmcav25} 
for discussions of this problem and its various applications.
A major breakthrough was achieved by Smith \cite{Smith:aicpd24}, who characterized these measures for 
compact subsets of the real line with at most countably many connected components. 
Shortly thereafter, Orloski and Sardari \cite{OrlSar:ldcai23} extended this characterization to arbitrary 
compact subsets of $\C$.

\begin{result}[\cite{Smith:aicpd24, OrlSar:ldcai23}]\label{R:APM_char}
Let $\mu \in \mathcal P(K)$ be symmetric with respect to the real axis. Then
$\mu$ is an arithmetic probability measure if and only if 
\begin{equation}\label{E:infinite}
\int \log |Q(z)|\, d\mu(z) \geq 0 \qquad \forall\, Q \in \Z[z] \setminus \{0\},
\end{equation}
where $\Z[z]$ denotes the set of polynomials with integer coefficients.
\end{result}

As observed by Serre \cite{Serre19:dadvpef}, the forward implication of Result~\ref{R:APM_char} 
is straightforward. The converse direction, however, is substantially more involved. Its proof requires a delicate 
combination of potential-theoretic arguments and existence results from the geometry of numbers. 
While Result~\ref{R:APM_char} provides a complete characterization,
checking whether a given probability measure satisfies the inequalities in \eqref{E:infinite} requires verifying 
infinitely many inequalities.
As a result, it is not well understood which measures, even on circles and intervals, satisfy all these 
inequalities simultaneously. This motivates the following more general question, attributed to Peter Sarnak 
in \cite[page~2]{OrlSar:qcft23}.
\begin{question}[Sarnak]\label{Q:Sarnak}
What are the measures with compact support $K$ that satisfy the infinite number of inequalities
in \eqref{E:infinite}?
\end{question}

We consider a broad family of probability measures 
for which the infinitely many inequalities in \eqref{E:infinite} admit a finite criterion.
Let $K\subset\mathbb C$ be a compact set with $\capa(K)>0$ that is symmetric with respect to the real axis.
Suppose that $\alpha$ is an algebraic number whose Galois conjugates lie outside $K$, i.e.,
\[
\conj(\alpha) \cap K = \emptyset.
\]
For each $t\in[0,1]$, define the probability measure
\begin{equation}\label{E:conv_measure}
\mu_t(\alpha, K):= t{\equi} + \frac{1-t}{\deg \alpha} \sum_{\gamma \in \conj(\alpha)} {\harm},
\end{equation}
where $\nu_K$ is the equilibrium measure of $K$ and $\harm$ 
is the harmonic measure on $K$ with respect to $\gamma$
(see Section~\ref{S:prelim} for precise definitions).
The assumption $\conj(\alpha) \cap K = \emptyset$ ensures that each harmonic measure $\harm$ 
is well defined. By construction, $\mu_t(\alpha, K) \in \mathcal P(K)$ and is symmetric with respect to the 
real axis for every $t\in[0,1]$.
\smallskip

The family of measures $\mu_t(\alpha, K)$ is motivated, in part, by the special case 
$\alpha=0$, for which \eqref{E:conv_measure} reduces to
\begin{equation}\label{E:zero}
\mu_t(0, K)=t{\equi} + (1-t) \harmz.
\end{equation}
An arithmetic probability measure of this form supported on a certain interval was used by Smith \cite{Smith:aicpd24}
to give a negative answer to the classical Schur--Siegel--Smyth trace problem.
More broadly, the relevance of this family is underscored by the fact that arithmetic 
probability measures of the form \eqref{E:conv_measure} are dense
 in the space of all arithmetic probability measures with respect to the weak* topology (see Proposition~\ref{P:dense}).
\smallskip

Our first main result shows that, for measures of the form \eqref{E:conv_measure}, the infinite system of 
inequalities in \eqref{E:infinite} is controlled by two explicit conditions.
To state the result, for an algebraic number $\alpha$, let
$P_{\alpha}\in\Z[z]$ denote the unique primitive irreducible polynomial over $\Q$ with positive leading 
coefficient satisfying $P_{\alpha}(\alpha)=0$.
We also introduce
\[
\minalp:=\min_{z \in K} |P_{\alpha}(z)|
\]
and write $\log^+z:= \max\{\log z, 0\}$ for $z>0$.

\begin{customthm}{A}\label{ct:suff}
With $K$ and $\alpha$ as above, for $t \in [0,1]$, set $\mu_t:= \mu_t(\alpha, K)$.
Suppose that
\begin{align}
\int \log |P_\alpha(z)|\,d\mu_t(z)&\ge0, \notag \\
\frac{1-t}{\deg\alpha}
 \left( \int \log |P_\alpha(z)|\,d{\equi}(z)
-\log^+\minalp \right)
&\le \log \capa(K). \notag
\end{align}
Then
$\mu_t$ satisfies all the inequalities in \eqref{E:infinite}
and, by Result~\ref{R:APM_char}, is therefore an arithmetic probability measure.
\end{customthm}
The two conditions in Theorem~\ref{ct:suff} reflect the basic dichotomy according to 
whether an integer polynomial vanishes at $\alpha$.
The first controls polynomials vanishing at $\alpha$ through the inequality for $P_{\alpha}$, 
while the second provides the estimate needed for polynomials $Q\in\Z[z]$ 
with $Q(\alpha) \neq 0$.
Thus, for the measures $\mu_t(\alpha, K)$, it suffices to verify these two conditions in place of 
the infinitely many inequalities in \eqref{E:infinite}.
This criterion is non-vacuous whenever $\capa(K)>1$.
Indeed, a standard property of the equilibrium measure gives 
\[
\int \log |P_{\alpha}(z)| \, d {\equi}(z) \geq \log \capa(K) >0.
\]
It follows that both conditions are satisfied for $t$ sufficiently close to 1. 
Moreover, if the two conditions hold for some $t_0\in[0,1]$, then they hold for every $t\in[t_0,1]$.
\smallskip

The integral appearing in the second condition reflects the position of the Galois conjugates of $\alpha$
relative to $K$. To see this concretely and obtain an explicit formula, consider the circle
$K=\{z \in \C:|z|=r\}$. In this case,
\[
\frac{1}{\deg\alpha} \int \log |P_\alpha(z)|\,d{\equi}(z)
=\log r + \frac{1}{\deg\alpha} \bigg( \log \ell_{\alpha} 
+ \sum_{\gamma \in \conj(\alpha)} \log^+ \frac{|\gamma|}{r} \bigg),
\]
where $\ell_{\alpha}$ is the leading coefficient of $P_\alpha$.
When $\capa(K)=r>1$, the formula shows that, for algebraic integers, the closer the conjugates 
lie to the circle from the exterior, the smaller the threshold value of $t$ for satisfying the second 
condition. Moreover, this threshold tends to $0$ as the conjugates approach the circle
(see Remark~\ref{rem:smallt} for the general case).
This integral may be viewed as a $K$-relative logarithmic Mahler measure, or height, 
associated with $\alpha$.
For the unit circle, this is precisely the classical logarithmic Weil height $h(\alpha)$.
\smallskip

Theorem~\ref{ct:suff} also shows that sufficiently distant Galois conjugates automatically yield 
arithmetic probability measures.

\begin{corollary}\label{cor:faraway}
Assume that $\capa(K) > 1$. Then there 
exists a constant $r > 0$, depending only on $K$, such that if
every $\gamma \in \conj(\alpha)$ satisfies $|\gamma| \ge r$,
then $\mu_t(\alpha,K)$ is an arithmetic probability measure for all $t \in [0,1]$.
\end{corollary}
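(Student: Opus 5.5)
We derive the corollary from Theorem~\ref{ct:suff} by checking both of its hypotheses for every $t\in[0,1]$ once all conjugates of $\alpha$ lie far from $K$. Fix $R>0$ with $K\subset\{|z|\le R\}$ and write $P_\alpha(z)=\ell_\alpha\prod_{\gamma\in\conj(\alpha)}(z-\gamma)$, with $\ell_\alpha\ge 1$ an integer. Each hypothesis is affine in $t$. By the remark after Theorem~\ref{ct:suff}, if both hold at $t_0$ they hold on $[t_0,1]$, so it suffices to treat $t=0$. We take $r\ge 2R+2$, to be enlarged as needed.

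\emph{First condition at $t=0$.} For $z\in K$ and $|\gamma|\ge r$ we have $|z-\gamma|\ge |\gamma|-R\ge r-R\ge 1$. Hence $\log|P_\alpha(z)|\ge \log \ell_\alpha\ge 0$ on all of $K$. Integrating against the probability measure $\mu_0=\frac{1}{\deg\alpha}\sum_\gamma\harm$ gives $\int\log|P_\alpha|\,d\mu_0\ge 0$. Since $\nu_K$ is also a probability measure, the same bound gives the inequality for every $t$.

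\emph{Second condition.} Here $\minalp\ge \ell_\alpha (r-R)^{\deg\alpha}\ge 1$, so $\log^+\minalp=\log\minalp$. We need to bound
\[
\frac{1}{\deg\alpha}\Bigl(\int\log|P_\alpha|\,d\nu_K-\min_K\log|P_\alpha|\Bigr)
\le \frac{1}{\deg\alpha}\sum_{\gamma}\Bigl(\max_{z,w\in K}\log\frac{|z-\gamma|}{|w-\gamma|}\Bigr).
\]
The factor $\ell_\alpha$ cancels, and the $\deg\alpha$ terms are averaged. For $|\gamma|\ge r$ and $z,w\in K$,
\[
\frac{|z-\gamma|}{|w-\gamma|}\le\frac{|\gamma|+R}{|\gamma|-R}\le \frac{r+R}{r-R}.
\]
So the left side is at most $\log\frac{r+R}{r-R}$, uniformly in $\alpha$ and $t$. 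Because $\capa(K)>1$, we can choose $r$ depending only on $K$ so that $\log\frac{r+R}{r-R}\le\log\capa(K)$. Then $(1-t)$ times the left side is also $\le\log\capa(K)$ for every $t\in[0,1]$, which is the second condition.

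Theorem~\ref{ct:suff} now applies for all $t\in[0,1]$, and Result~\ref{R:APM_char} gives the conclusion. The main point is the uniformity of the second estimate. The quantity $\int\log|P_\alpha|\,d\nu_K$ alone grows with $|\gamma|$ and $\ell_\alpha$, but subtracting $\log\minalp$ cancels both contributions. What remains is an oscillation of $\log|z-\gamma|$ over $K$, which tends to $0$ as $|\gamma|\to\infty$.
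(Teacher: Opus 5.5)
Your proof is correct and follows the same route as the paper. Both arguments verify the two hypotheses of Theorem~\ref{ct:suff} for every $t$. Both use $\minalp\ge 1$, so that $\log^+\minalp=\log\minalp$ and $\int\log|P_\alpha|\,d\mu_t\ge\log\minalp\ge 0$. Both then rely on the cancellation of $\log\ell_\alpha$ and the $\log|\gamma|$ terms in the second condition. The only difference is that you replace the paper's $o(1)$ asymptotics with the explicit oscillation bound $\log\frac{r+R}{r-R}$, which makes the uniformity in $\alpha$ transparent and gives a concrete admissible $r$.
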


Under additional arithmetic assumptions on $K$,
we obtain two complete characterizations for the measures under consideration, 
presented in the next two theorems. In each case, the infinitely many inequalities in \eqref{E:infinite}
reduce to two inequalities involving explicitly specified integer polynomials.

\begin{customthm}{B}\label{ct:ns}
Assume every connected component of K is nonsingleton and that there exists an algebraic integer 
$\beta$ such that $P_{\alpha}(\beta)$ is an algebraic unit and $\conj(\beta) \subset K$.
Then $\mu_t:=\mu_t(\alpha,K)$ satisfies all the inequalities in \eqref{E:infinite}
if and only if
\[\int \log |P_{\alpha}(z)| \, d \mu_t(z) \geq 0 \quad {\text{and}} \quad
\int \log |P_{\beta}(z)| \, d \mu_t(z) \geq 0.\]
\end{customthm}
Recall that an algebraic unit is an algebraic integer whose inverse is also an algebraic integer.
As a simple example illustrating the preceding characterization, consider the special case $\alpha=0$.
Assume that $K \subset \R \setminus \{0\}$ is compact, every connected component of $K$ is 
nonsingleton, and $1 \in K$. 
Then $\mu_t:=\mu_t(0,K)$, which takes the form \eqref{E:zero},
is an arithmetic probability measure if and only if 
\[\int \log |z| \, d \mu_t(z) \geq 0 \quad {\text{and}} \quad
\int \log |z-1| \, d \mu_t(z) \geq 0.\]
Example~\ref{Ex:both_condi} shows, moreover, that neither of these two conditions, considered 
individually, is sufficient to characterize arithmetic probability measures in this setting. 
Theorem~\ref{ct:ns} is a special case of the more general result presented in Theorem~\ref{T:ns}.
\smallskip

We next turn to closed intervals, where the relevant potential-theoretic quantities admit explicit formulas. 
For intervals with integer endpoints, we obtain another complete characterization in terms of the two 
endpoint conditions below, which replace the infinitely many inequalities in \eqref{E:infinite}.

\begin{customthm}{C}\label{ct:interval}
Let $K=[a, b]$ be a closed interval with $a, b \in \Z$ and $a<b$, and
let $\alpha$ be an algebraic number such that $\conj(\alpha) \subset \R\setminus [a,b].$
Then $\mu_t:= \mu_t(\alpha,K)$ satisfies all the inequalities in \eqref{E:infinite}
if and only if
\[\int \log |z-a| \, d \mu_t(z) \geq 0 \quad {\text{and}} \quad
\int \log |z-b| \, d \mu_t(z) \geq 0.\]
\end{customthm}

In this setting, the measures appearing in the definition of
$\mu_t(\alpha,K)$ have explicit densities. 
For $K=[a,b]$, $a<b$,
the equilibrium measure is given by
\[
d\nu_K(x) = \frac{dx}{\pi\sqrt{(x-a)(b-x)}},
\qquad a<x<b.
\]
The harmonic measure with pole at $y\notin[a,b]$ is given by
\[
d\harmy(x)= \frac{\sqrt{|(y-a)(y-b)|}}{\pi |y-x|\sqrt{(x-a)(b-x)}}\,dx,
\qquad a<x<b.
\]
These formulas, combined with our results,
facilitate the construction of concrete examples of arithmetic probability measures.
\smallskip

Working with intervals with integer endpoints in fact allows us to consider a broader class of 
measures, as in Theorem~\ref{T:intval_z}, of which Theorem~\ref{ct:interval} is a special case. 
In this broader setting, the measures are defined using arbitrary convex combinations of equilibrium 
and harmonic measures, with harmonic measures taken with respect to arbitrary real points. 
A corresponding characterization can also be obtained for finite unions of intervals with integer endpoints.
Finally, the integral conditions in Theorem~\ref{ct:interval} remain sufficient even for arbitrary real 
endpoints $a, b$, although they are no longer necessary in general. 
\smallskip

The paper is organized as follows. Section~\ref{S:prelim} develops the potential-theoretic background 
used throughout the paper. Section~\ref{S:lemmas} establishes auxiliary results, 
including the density result, and presents examples.
Section~\ref{S:proofs} proves Theorems~\ref{ct:suff}~and~\ref{ct:ns} and their consequences. 
Finally, Section~\ref{S:interval}
focuses on the interval case, where we prove Theorem~\ref{ct:interval}  
and develop a more general version of that result.
\medskip

\section{Potential-theoretic preliminaries}\label{S:prelim}

In this section, we review the necessary background from logarithmic potential theory.
While the compact sets considered in Section~\ref{S:intro} are assumed to be symmetric with respect to the real axis, 
the potential-theoretic notions and results discussed here apply to arbitrary compact sets.
For a more comprehensive treatment, we refer the reader to \cite{ransford:ptCp95, SaffTotik:lpwef97}.
\smallskip

Throughout the paper, domains of integration are omitted for simplicity; integrals are understood over the 
support of the underlying measure, unless otherwise stated.
Let $\mu$ be a finite Borel measure on $\mathbb{C}$. The {\em logarithmic potential} of $\mu$ is defined by
\[
U^\mu(z):=\int \log \frac{1}{|z-w|}\, d\mu(w),
\]
and its {\em logarithmic energy} is given by
\[
I(\mu):=\int \int \log \frac{1}{|z-w|}\, d\mu(z)\, d\mu(w).
\]
If $\mu$ is compactly supported, then $-U^{\mu}$ is a subharmonic function on $\C$ that is harmonic
outside the support of $\mu$, and $I(\mu) > -\infty$.
These notions lead to the concept of logarithmic capacity. For a compact set $K\subset \mathbb{C}$, 
the {\em logarithmic capacity} is defined by
\[
\capa (K):=\exp \Big (-\inf_{\mu \in \mathcal{P}(K)} I(\mu)\Big).
\]
From a geometric perspective, the logarithmic capacity of $K$ admits an equivalent 
characterization as its transfinite diameter, defined by
\[
d_\infty(K) :=\lim_{n\to\infty}\max_{z_1,\ldots,z_n\in K} 
\bigg ( \prod_{1\le i<j\le n} |z_i-z_j|\bigg )^{\frac{2}{n(n-1)}}.
\]
Whenever $\capa(K)>0$, there exists a unique measure $\nu_K \in \mathcal{P}(K)$,
called the {\em equilibrium measure} of $K$, such that
\[
I(\nu_K)=\inf_{\mu \in \mathcal{P}(K)} I(\mu).
\]
The support of $\nu_K$ is contained in the boundary of the unbounded component of $\C\setminus K$.
For a circle, the equilibrium measure is the normalized arc-length measure.
For an arbitrary set $E\subset \C$, the logarithmic capacity is extended by
\[
\capa(E) :=\sup \{\capa(K) : K\subset E,\; K \text{ compact} \}.
\]
A property is said to hold {\em quasi-everywhere} on a set $E$
if it holds at every point of $E$ except possibly on a set of logarithmic capacity zero.
Frostman's theorem states that
\[
U^{\nu_K}(z)=-\log c(K)\quad \text{quasi-everywhere on }  K,
\]
while
\[
U^{\nu_K}(z)\leq -\log c(K)\quad \text{for all } z\in\mathbb{C}.
\]

We now turn to harmonic measure.
Let $\Omega$ be a domain in the Riemann sphere $\sph=\C \cup \{\infty\}$ whose boundary
$\partial \Omega$ has positive logarithmic capacity, and let $a \in \Omega$. 
The {\em harmonic measure of $\Omega$ with respect to $a$}, denoted by $\harmad$, 
is the unique Borel probability measure on
$\partial \Omega$ such that for every continuous function $f$ on $\partial\Omega$,
\[
u(a)=\int f(z)\, d\harmad (z),
\]
where $u$ is the harmonic function on $\Omega$ 
whose boundary extension agrees with $f$ quasi-everywhere on $\partial\Omega$.
Consequently, the harmonic measure of a Borel set 
$E\subset \partial\Omega$ is the value at $a$ of the harmonic 
function on $\Omega$ whose boundary values agree quasi-everywhere with
\[
\chi_E(z)=
\begin{cases}
1, & z \in E,\\
0, & z \in \partial\Omega\setminus E.
\end{cases}
\]
Harmonic measure can be interpreted probabilistically as the distribution of the first exit point of 
a Brownian particle starting at $a$ and moving inside $\Omega$.
\smallskip

We next recall the definition of the Green function. Let $\Omega\subset\sph$ be a domain, 
and let $a\in\Omega$. The Green function of $\Omega$ with pole at $a$, denoted by 
$g_\Omega(z,a)$, is the unique function satisfying
the following properties:
\begin{enumerate}[label=(\roman*)]
\item\label{I:firstpr} $g_{\Omega}(z,a)$ is nonnegative and harmonic in $\Omega \setminus \{a\}$ and 
bounded for $z$ outside of a neighborhood of $a$;
\item\label{I:secondpr} If $a \in \C$, then 
$g_{\Omega}(z,a) + \log |z-a|$
is bounded in a neighborhood of $a$, while if $a=\infty$, then
$g_{\Omega}(z,\infty)-\log |z|$ is bounded in a neighborhood of $\infty$;
\item For quasi-every $\zeta \in \partial\Omega$, we have
\[\displaystyle \lim_{z\to \zeta, z\in\Omega} g_{\Omega}(z,a)=0.\]
\end{enumerate}
Such a function exists if and only if $\partial \Omega$ is of positive logarithmic capacity.
We extend $g_{\Omega}$ to the whole Riemann sphere by setting
\begin{equation}\label{E:extension}
g_\Omega(\zeta,a):=
\begin{cases}
\displaystyle \limsup_{\substack{z\to\zeta, z\in\Omega}} g_\Omega(z,a),
& \zeta\in\partial\Omega, \\[1.5 ex]
0, & \zeta\notin\overline \Omega.
\end{cases}
\end{equation}
With this extension, $g_{\Omega}$ is subharmonic on $\sph \setminus \{a\}$
and satisfies
\[
g_\Omega(z,a)\geq 0,\qquad z\in\mathbb{C}\setminus\{a\}.
\]
A boundary point $\zeta \in\partial\Omega$ is called {\em regular} (with respect to $g_\Omega(z,a)$) if
\[
\lim_{z\to \zeta, z\in\Omega} g_{\Omega}(z,a)=0.
\]
This notion does not depend on the choice of $a$, and we therefore simply refer to 
regular boundary points. 
By the definition of the Green function, the set of irregular boundary points has logarithmic capacity zero.
A domain $\Omega$ is called {\em regular} if every boundary point is regular.
\smallskip

There is a representation formula connecting the Green function and harmonic measure 
(see \cite[page~103]{SaffTotik:lpwef97}) 
that will be useful in what follows.
Let $\Omega \subset \sph$ be a domain such that $\partial \Omega$ is of positive capacity.
For $a \in \Omega \setminus \{\infty\}$ and $z \in \sph \setminus \{a, \infty\}$, we have
\begin{equation}\label{E:green_harm}
g_{\Omega}(z,a)
=\log \frac{1}{|z-a|} - \int \log \frac{1}{|z-\zeta|}\, d \harmad (\zeta)+g_{\Omega}(a,\infty).
\end{equation}
Observe that if $\Omega$ does not contain point at $\infty$ then by our extended definition
of $g_{\Omega}$, we have $g_{\Omega}(a,\infty)=0$.
\smallskip

Let $K \subset \C$ be compact with positive logarithmic capacity, and let  $a \notin K$. 
To work directly with $K$, we adapt our notation as follows.
We write
\[
\harma:=\harmad.
\]
where $\Omega$ denotes the connected component of $\sph \setminus K$ containing $a$.
Since harmonic measure is supported on the boundary of its underlying domain, it follows that
$\harma$ is supported on the boundary $\partial K$.
Thus, although $\harma$ is defined using the connected component $\Omega$, 
it is naturally regarded as a measure on $K$.
In the special case $a=\infty$, harmonic measure coincides with the equilibrium measure of $K$:
\[
\harminfty= \nu_K.
\]
For $a \neq \infty$, the corresponding harmonic measure can be related to an equilibrium measure 
by means of a Möbius transformation. Specifically, consider the conformal map
\[
f(z)=\frac{1}{z-a}.
\]
Since harmonic measure is conformally invariant, we obtain
\[
\harma= f^*\nu_{f(K)},
\]
where $f^*$ denotes the pullback of measures (equivalently, the inverse pushforward $(f^{-1})_*$).
In this setting, we also write 
\[
g_K(z, a):=g_{\Omega}(z, a).
\]
In view of the extension of $g_\Omega$ defined in \eqref{E:extension}, 
the notation $g_K(z, a)$ is well defined whenever $a \notin K$ and $z \in \sph \setminus \{a\}$.
In particular, $g_K$ inherits the nonnegativity of $g_\Omega$, a property that will be used later.
It follows from Frostman's theorem that
\begin{equation}\label{E:green}
g_K(z, \infty)= -U^{\nu_K}(z) - \log \capa(K), \quad z \in \C.
\end{equation}
For $a \neq \infty$, the representation formula \eqref{E:green_harm} yields
\begin{equation}\label{E:green_harmK}
g_{K}(z,a)
=\log \frac{1}{|z-a|} - U^{\harma}(z) +g_{K}(a,\infty), \quad z \in \C \setminus \{a\}.
\end{equation}
Finally, we say that $K$ is {\em regular} if every connected component of $\sph \setminus K$ is regular 
in the sense defined above.
\medskip

\section{Auxiliary results}\label{S:lemmas}

This section collects auxiliary results, examples, and remarks that support and complement the 
main results of the paper. We begin by establishing two identities for logarithmic integrals, 
one with respect to the equilibrium 
measure and the other with respect to harmonic measure. Since $\mu_t(\alpha, K)$, defined in 
\eqref{E:conv_measure}, is a convex combination of these measures, these identities allow us to handle 
logarithmic integrals with respect to $\mu_t(\alpha, K)$. We also prove a density result 
for arithmetic probability measures of the form \eqref{E:conv_measure}.
\smallskip

The first identity gives a formula for the logarithmic integral of a polynomial with respect to 
the equilibrium measure.

\begin{lemma}\label{L:jensen}
Let $K \subset \C$ be a compact set with $\capa(K)>0$, and let $Q \in \C[z]$ be a nonzero polynomial. 
Then
\[
\int \log |Q(z)| \, d {\equi}(z)= \log |\ell_Q| + \deg Q \log \capa(K) + \sum_{Q(r)=0} g_K(r, \infty).
\]
where $\ell_Q$ is the leading coefficient of $Q$, and the sum is taken over the zeros of $Q$, 
counted with multiplicity.
\end{lemma}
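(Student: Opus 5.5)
Factor $Q(z)=\ell_Q\prod_{Q(r)=0}(z-r)$, with the product taken over the zeros counted with multiplicity. Then
\[
\int \log|Q(z)|\,d\nu_K(z)=\log|\ell_Q|+\sum_{Q(r)=0}\int\log|z-r|\,d\nu_K(z)=\log|\ell_Q|-\sum_{Q(r)=0}U^{\nu_K}(r).
\]
Since $\nu_K$ is a compactly supported probability measure, each integral is well defined and lies in $[-\infty,\infty)$. It can equal $-\infty$ only if $U^{\nu_K}(r)=+\infty$, which cannot happen because Frostman's theorem gives $U^{\nu_K}\le -\log\capa(K)<\infty$ everywhere. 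So every term is finite, and splitting the integral of the sum into a sum of integrals is legitimate. If $\deg Q=0$, the sum is empty and both sides equal $\log|\ell_Q|$.

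Next apply \eqref{E:green}, which is valid for all $z\in\C$ (including points of $K$, via the extension \eqref{E:extension}): $-U^{\nu_K}(r)=g_K(r,\infty)+\log\capa(K)$. Summing over the $\deg Q$ zeros gives
\[
-\sum_{Q(r)=0}U^{\nu_K}(r)=\deg Q\,\log\capa(K)+\sum_{Q(r)=0}g_K(r,\infty),
\]
which combined with the first display is exactly the claimed identity.

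The only delicate point is that \eqref{E:green} must hold pointwise at every zero $r$, including zeros lying on $K$, at irregular boundary points, or in bounded components of $\C\setminus K$. I would take \eqref{E:green} as stated in the paper for all $z\in\C$. If this needs justification, use the following.
\begin{itemize}
\item \emph{On the unbounded component $\Omega$ of $\sph\setminus K$:} the identity follows from uniqueness of the Green function. The function $-U^{\nu_K}-\log\capa(K)$ is harmonic there, behaves like $\log|z|+O(1)$ at $\infty$, and tends to $0$ at quasi-every boundary point by Frostman.
\item \emph{Off $\overline\Omega$ (interior of $K$ and bounded components):} the potential $U^{\nu_K}$ is harmonic on each bounded component of $\C\setminus\partial\Omega$. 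It equals $-\log\capa(K)$ q.e. on the boundary and is bounded above by it, so it is constant $-\log\capa(K)$ there, matching $g_K=0$.
\item \emph{On $\partial\Omega$:} both sides are subharmonic functions agreeing on $\Omega$ and off $\overline\Omega$. Agreement at boundary points then follows from the upper semicontinuous regularization implicit in the $\limsup$ definition \eqref{E:extension}, since subharmonic functions are determined by their fine-limit/$\limsup$ behavior.
\end{itemize}
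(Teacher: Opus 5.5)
Your argument is correct and is essentially the paper's proof: factor $Q$, split off $\log|\ell_Q|$, and rewrite each $\int\log|z-r|\,d\nu_K(z)=-U^{\nu_K}(r)$ via \eqref{E:green}; your finiteness check and the sketched pointwise justification of \eqref{E:green} are extras that the paper takes for granted. In your boundary bullet, the fact that actually does the work is that the $\limsup$ in \eqref{E:extension} runs over the connected set $\Omega$, which is non-thin at every point of $\partial\Omega$ \cite[Theorem~3.8.3]{ransford:ptCp95}, so it recovers the value of the subharmonic function $-U^{\nu_K}-\log\capa(K)$ there; almost-everywhere uniqueness of subharmonic functions alone would not suffice, since $\partial\Omega$ may have positive area.
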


\begin{proof}
We write $Q$ in its factored form
$$Q(z)=\ell_Q\prod_{j=1}^{\deg Q}(z-r_j),$$ where 
$r_1,\dots,r_{\deg Q}$ are the zeros of $Q$, counted with multiplicity. 
Taking logarithms of absolute values and integrating with respect to the equilibrium measure $\nu_K$,
we obtain
\[
\int \log |Q(z)| \, d {\equi}(z) 
= \log|\ell_Q| + \sum_{j=1}^{\deg Q} \int  \log |z-r_j| \, d {\equi}(z).
\]
By \eqref{E:green}, for every $r_j$, we have
\[
\int  \log |z-r_j| \, d {\equi}(z)=-U^{\nu_K}(r_j)
=\log\capa(K)+g_K(r_j,\infty).
\]
Substituting this into the preceding expression yields the desired formula.
\end{proof}

The following result establishes the corresponding identity for harmonic measure.
Unlike Lemma~\ref{L:jensen}, 
the resulting expression depends on the value of the polynomial at the pole of the harmonic measure 
and on the Green function relative to that pole.

\begin{lemma}\label{L:harm}
Let $K \subset \C$ be a compact set with $\capa(K)>0$, let $a \notin K$ and
let $Q \in \C[z]$ be a nonzero polynomial satisfying $Q(a)\neq 0$. 
Then
\[
\int \log |Q(z)| \, d \harma (z) = \log |Q(a)|+ \sum_{Q(r)=0} g_K(r, a)- \deg Q \cdot g_K (a, \infty), 
\]
where the sum is taken over the zeros of $Q$, 
counted with multiplicity.
\end{lemma}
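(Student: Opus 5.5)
Factor $Q(z)=\ell_Q\prod_{j=1}^{\deg Q}(z-r_j)$, exactly as in the proof of Lemma~\ref{L:jensen}. Then
\[
\int \log|Q|\,d\harma=\log|\ell_Q|+\sum_j\int\log|z-r_j|\,d\harma(z)=\log|\ell_Q|-\sum_j U^{\harma}(r_j).
\]
Each $U^{\harma}(r_j)$ can be rewritten using the representation formula \eqref{E:green_harmK}. It applies because $Q(a)\neq 0$ forces $r_j\neq a$, which gives
\[
-U^{\harma}(r_j)=g_K(r_j,a)+\log|r_j-a|-g_K(a,\infty).
\]
Summing over $j$, the terms $\log|\ell_Q|+\sum_j\log|r_j-a|$ combine to $\log|Q(a)|$. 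There are $\deg Q$ copies of $-g_K(a,\infty)$, which give $-\deg Q\cdot g_K(a,\infty)$. The Green terms give $\sum_j g_K(r_j,a)$. This is exactly the claimed formula.

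The only point needing care is whether \eqref{E:green_harmK} holds at every $z\in\C\setminus\{a\}$, including when a zero $r_j$ lies on $K$ or in a different component of $\sph\setminus K$ from $a$. For such $z$ the left side is defined through the extension \eqref{E:extension}.

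\begin{itemize}
\item \emph{Zeros in a different component.} Here $g_K(r_j,a)=0$ by definition, and the formula reduces to $U^{\harma}(r_j)=\log\frac{1}{|r_j-a|}+g_K(a,\infty)$. This is the classical statement that the potential of harmonic measure matches that of the point mass off $\overline\Omega$, up to the constant.
\item \emph{Zeros on $\partial\Omega$.} The formula should follow by taking $\limsup$ from inside $\Omega$. The function $-U^{\harma}$ is subharmonic, and upper semicontinuity together with the fine-limit property of subharmonic functions makes the $\limsup$ along $\Omega$ equal to the value.
\end{itemize}

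Since the paper states \eqref{E:green_harmK} for all $z\in\C\setminus\{a\}$, citing \cite[page~103]{SaffTotik:lpwef97}, I will rely on that statement directly and treat these cases as covered by it. This identification on the boundary and complement is the one place I expect any subtlety. The rest is bookkeeping.
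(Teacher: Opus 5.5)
Your proof is correct and is essentially the paper's argument: factor $Q$, apply \eqref{E:green_harmK} at each zero $r_j\neq a$, and recombine $\log|\ell_Q|+\sum_j\log|a-r_j|$ into $\log|Q(a)|$. Your extra remarks on zeros lying on $\partial\Omega$ or outside $\overline{\Omega}$ are sound, since a connected open set is non-thin at its boundary points and so the $\limsup$ from $\Omega$ of the subharmonic function $-U^{\harma}$ equals its value; the paper does not argue this and simply takes \eqref{E:green_harmK} as valid on all of $\C\setminus\{a\}$.
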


\begin{proof}
Again, we write $Q$ in its factored form
\[
Q(z)=\ell_Q\prod_{j=1}^{\deg Q}(z-r_j).
\] 
Since $Q(a)\neq 0$, we have $r_j \neq a$ for any $j$. Using \eqref{E:green_harmK}, we obtain
\begin{align}
\int \log |Q(z)| \, d \harma (z) 
&= \log |\ell_Q| + \sum_{j=1}^{\deg Q}  \int \log |z-r_j| \, d \harma (z) \notag \\
&= \log |\ell_Q| + \sum_{j=1}^{\deg Q} \big(\log |a -r_j| + g_K(r_j, a)\big)- \deg Q\cdot g_K (a, \infty). \notag
\end{align}
Combining the logarithmic terms gives the desired identity.
\end{proof}

We next establish that the measures of the form \eqref{E:conv_measure} satisfying all the 
inequalities in \eqref{E:infinite} are dense, in the weak*
topology, among compactly supported Borel probability measures satisfying the same inequalities.

\begin{proposition}\label{P:dense}
Let $K \subset \C$ be compact and symmetric about the real axis. If $\mu\in\mathcal P(K)$
satisfies the inequalities in \eqref{E:infinite}, then there exists a sequence 
$\{\mu_n\}$ of probability measures of the form \eqref{E:conv_measure}, 
each satisfying the inequalities in \eqref{E:infinite}, such that $\mu_n\to\mu$ in the weak* topology.
\end{proposition}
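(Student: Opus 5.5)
The plan is to reduce to the case $\capa(K)>1$, approximate $\mu$ by harmonic measures with poles at Galois conjugates of algebraic integers lying just outside $K$, and then check the inequalities in \eqref{E:infinite} through Theorem~\ref{ct:suff} rather than directly.

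\textbf{Step 0 (trivial capacities).} First dispose of the degenerate cases. If $\capa(K)<1$, then no $\mu\in\mathcal P(K)$ satisfies \eqref{E:infinite}, by the Fekete--Szeg\H{o} discussion in Section~\ref{S:intro} together with Result~\ref{R:APM_char}. If $\capa(K)=1$, the only such measure is $\nu_K=\mu_1(\alpha,K)$ for any admissible $\alpha$, so the constant sequence works. Hence assume $\capa(K)>1$.

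\textbf{Step 1 (choosing the approximating integers).} By Result~\ref{R:APM_char}, $\mu$ is arithmetic. I would choose algebraic integers $\alpha_n$ with $\conj(\alpha_n)\cap K=\emptyset$ and the following three properties:
\begin{enumerate}[label=(\alph*)]
\item every conjugate lies within distance $\varepsilon_n\to0$ of $\partial K$;
\item the averages $\frac1{\deg\alpha_n}\sum_{\gamma\in\conj(\alpha_n)}\omega(\gamma,K)$ converge weak* to $\mu$;
\item $\frac1{\deg\alpha_n}\sum_{\gamma\in\conj(\alpha_n)} g_K(\gamma,\infty)\to0$.
\end{enumerate}
The mechanism behind (b) is that $\omega(\gamma,K)\to\delta_\zeta$ weak* as $\gamma\to\zeta$ for a regular point $\zeta\in\partial K$. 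I would obtain the $\alpha_n$ by applying the Fekete--Szeg\H{o}/Smith--Orloski--Sardari machinery to slightly enlarged compact sets: for instance, to sets built from small arcs placed just outside $K$ next to the points charged by $\mu$. These enlarged sets have capacity greater than $1$ and carry measures that push forward, via $\gamma\mapsto\omega(\gamma,K)$, to approximations of $\mu$.

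\textbf{Caveat about interior mass.} This approach only reaches measures carried by $\partial K$. If $\mu$ charges the interior of $K$, or bounded components of $\sph\setminus K$ intervene, this step fails as stated. I would then have to sweep (balayage) $\mu$ to the boundary and argue that the inequalities in \eqref{E:infinite} are preserved. I regard this as a genuine gap that the plan must confront.

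\textbf{Step 2 (the second condition of Theorem~\ref{ct:suff}).} Set $\mu_n:=\mu_{t_n}(\alpha_n,K)$ with $t_n\downarrow0$ chosen slowly. By Lemma~\ref{L:jensen}, with $\ell=1$ since $\alpha_n$ is an algebraic integer, the second condition becomes
\[
(1-t_n)\Big(\tfrac1{\deg\alpha_n}\textstyle\sum_\gamma g_K(\gamma,\infty)-\tfrac1{\deg\alpha_n}\log^+\mathcal M_{\alpha_n,K}\Big)\le t_n\log\capa(K).
\]
By (c), the left side tends to $0$, so this holds once $t_n$ decays more slowly than that quantity.

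\textbf{Step 3 (the first condition; main obstacle).} The first condition is $\int\log|P_{\alpha_n}|\,d\mu_n\ge0$. Lemma~\ref{L:harm} does not apply directly, because $P_{\alpha_n}$ vanishes at the poles $\gamma$. I would therefore write $P_{\alpha_n}(z)=(z-\gamma)R_\gamma(z)$ and use \eqref{E:green_harmK} for the linear factor, getting
\[
\int\log|z-\gamma|\,d\omega(\gamma,K)=-g_K(\gamma,\infty).
\]
Lemma~\ref{L:harm} then handles $R_\gamma$. The resulting expression is an average of $\log|\gamma-\gamma'|$ over pairs of distinct conjugates, plus Green-function terms. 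That pair average is $\ge0$ by the discriminant bound $|\mathrm{disc}(\alpha_n)|\ge1$. The Green-function terms are small by (a) and (c). On top of this, $t_n\int\log|P_{\alpha_n}|\,d\nu_K\ge t_n\deg\alpha_n\log\capa(K)$ supplies a definite positive margin.

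The delicate point is controlling the terms $g_K(\gamma',\gamma)$ uniformly, since the poles accumulate on $\partial K$. I would first try to bound them by $\varepsilon_n$-dependent Harnack estimates, which is where regularity of $\partial K$ enters. Finally, Theorem~\ref{ct:suff} gives that each $\mu_n$ satisfies \eqref{E:infinite}, and $\mu_n\to\mu$ weak* by (b) together with $t_n\to0$.
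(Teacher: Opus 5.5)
\textbf{Genuine gap.} The gap is the one you flagged yourself, but it cannot be closed inside your framework, because that framework keeps the set $K$ fixed.

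Every measure $\mu_t(\alpha,K)$ on the \emph{fixed} set $K$ is carried by the closed set $\partial K$, since both $\nu_K$ and each $\omega(\gamma,K)$ live there. Hence every weak$^*$ limit of such measures is carried by $\partial K$. Measures satisfying \eqref{E:infinite} can charge $\operatorname{int}K$. For example, take $K=\{|z|\le 2\}$ and $\mu$ the normalized arc length on $|z|=1$. Lemma~\ref{L:jensen} applied to the unit circle gives $\int\log|Q|\,d\mu\ge\log|\ell_Q|\ge0$ for every nonzero $Q\in\Z[z]$, yet $\mu(\operatorname{int}K)=1$. Sweeping $\mu$ onto $\partial K$ does preserve \eqref{E:infinite}, since balayage can only increase $-U^{\mu}$. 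But your sequence would then converge to the swept measure, not to $\mu$.

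\textbf{The missing idea.} The compact set must change with $n$. In the proposition, ``of the form \eqref{E:conv_measure}'' means $\mu_t(\alpha,K')$ for some symmetric compact $K'$, and $K'$ need not lie inside $K$. The paper proceeds as follows.
\begin{enumerate}
\item Take algebraic integers $\alpha_n$ from Result~\ref{R:APM_char} whose conjugates equidistribute to $\mu$, and put $P_n=P_{\alpha_n}$ and $f_n=P_n^{\deg P_{n+1}+1}/P_{n+1}^{\deg P_n}$.
\item Let $\mu_n$ be the normalized pullback under $f_n$ of arc length on the unit circle. It is supported on the lemniscate $K_n=f_n^{-1}(S^1)$.
\item On $\{|f_n|>1\}$, $\log|f_n|$ is the sum of the Green functions at the poles, weighted by their orders. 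The poles are $\infty$ and the conjugates of $\alpha_{n+1}$, all of order $\deg P_n$. Hence $\mu_n=\mu_t(\alpha_{n+1},K_n)$ exactly, with $t=1/(\deg\alpha_{n+1}+1)$.
\item Both the inequalities for $\mu_n$ and the convergence $\mu_n\to\mu$ are imported from \cite{BMQS:cgemh26}. The convergence comes from $-U^{\mu_n}$ being essentially the maximum of $\frac{1}{\deg P_n}\log|P_n|$ and $\frac{1}{\deg P_{n+1}}\log|P_{n+1}|$.
\end{enumerate}
Theorem~\ref{ct:suff} plays no role in this argument.

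\textbf{Secondary problems, even for $\mu$ carried by $\partial K$.}
\begin{enumerate}
\item In Step~1, Result~\ref{R:APM_char} only produces algebraic integers approximating a measure that \emph{already} satisfies \eqref{E:infinite}. Knowing that your enlarged sets have capacity $>1$ does not give you such a measure near $\mu$. One way to obtain one is to sweep $\mu$ outward onto the boundary of a small neighbourhood of $K$, which again preserves \eqref{E:infinite}; but this has to be supplied and justified.
\item In Step~3, the identity $\int\log|z-\gamma|\,d\omega(\gamma,K)(z)=-g_K(\gamma,\infty)$ is false. By \eqref{E:green_harmK} the integral equals $\lim_{z\to\gamma}\bigl(g_K(z,\gamma)+\log|z-\gamma|\bigr)-g_K(\gamma,\infty)$. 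This limit tends to $-\infty$ as $\gamma$ approaches $\partial K$. For the unit circle and $\gamma=r\in(0,1)$ the integral is $\log(1-r^2)$; compare the term $-\log\capa(1/K)$ in Example~\ref{Ex:both_condi}.
\item These diagonal terms are what threaten the first condition in your regime (a). The terms $g_K(\gamma',\gamma)$ are not the difficulty: they are nonnegative and can simply be discarded. Controlling the diagonal terms forces an extra requirement such as $\log(1/\varepsilon_n)=o(\deg\alpha_n)$.
\end{enumerate}
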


\begin{proof}
Since $\mu \in \mathcal P(K)$ satisfy all the inequalities in \eqref{E:infinite}, 
Result~\ref{R:APM_char} implies that there exists a sequence of algebraic integers $\{\alpha_n\}$ 
satisfying conditions~\ref{I:deg}--\ref{I:weak} of Definition~\ref{D:APM}.
Passing to a subsequence if necessary, we may assume that the degrees are strictly increasing.
Let $\delta_{\conj(\alpha_n)}$ denote the normalized counting measure on $\conj(\alpha_n)$,
and let $P_n:= P_{\alpha_n}$.
Define
\[
f_n(z):= \frac {{P_n(z)}^{(\deg P_{n+1})+1}}{{P_{n+1}(z)}^{\deg P_n}}.
\]
We then define the measures
\[
\mu_n:= f_n^* (m_{S^1}),
\]
where $m_{S^1}$ is the normalized arc-length measure on the unit circle, and 
$f_n^*$ denotes the normalized pull-back.
It is shown in \cite{BMQS:cgemh26} that each $\mu_n$ satisfies all the inequalities in \eqref{E:infinite}
and that $\mu_n \to \mu$ in the weak* topology as $n \to \infty$.
Indeed, we have
\[
U^{\mu_n}= -\max \left \{\frac{\log |P_n|}{\deg P_n}, \frac{\log |P_{n+1}|}{\deg P_{n+1}} \right\}
= -\min \left \{U^{\delta_{\conj(\alpha_n)}}, U^{\delta_{\conj(\alpha_{n+1})}} \right\}.
\]
Since $U^{\delta_{\conj(\alpha_n)}}$ converges to $U^{\mu}$ in $L^1_{loc}(\C)$, it follows that 
$U^{\mu_n}$ also converges to $U^{\mu}$ in $L^1_{loc}(\C)$. Thus
$\mu_n \to \mu$ in the weak* topology.
Since the degrees of $\alpha_n$ are increasing, the sets 
\[
K_n:= \operatorname{supp} (\mu_n) = f_n^{-1} (S^1)
\]
are compact.
Since each $P_n \in \Z[z]$, each $K_n$ is symmetric with respect to the real axis.
\smallskip

Finally, by the definition of $\mu_n$, we see that it is a convex combination of the harmonic 
measures on $K_n$ with respect to the poles of $f_n$, where the weights are proportional 
to the orders of these poles. The poles of $f_n$ are exactly the Galois conjugates of 
$\alpha_{n+1}$ together with $\infty$. Furthermore, all finite poles of $f_n$ have the same order. 
Since the harmonic measure of $K_n$ with respect to $\infty$ coincides with the equilibrium 
measure of $K_n$, it follows that $\mu_n$ has the form given in \eqref{E:conv_measure}, 
completing the proof.
\end{proof}

\begin{remark}\label{rem:negativepotential}
If $\capa(K) =1$, then the equilibrium measure $\equi$ is the 
only arithmetic probability measure on $K$. To see this, let $\{\alpha_n\}$ be a sequence of algebraic 
integers satisfying conditions \ref{I:deg} and \ref{I:neighbour} in Definition~\ref{D:APM}. Let
$d_n= \deg \alpha_n$, and denote by $\gamma_{n,1}, \dots, \gamma_{n, d_n}$ the Galois conjugates
of $\alpha_n$. Since
\[
\operatorname{Disc}(\alpha_n)=\prod_{i<j}(\gamma_{n,i}-\gamma_{n,j})^2
\]
is a nonzero integer, any weak$^*$ subsequential limit $\mu$ of
the normalized counting measure on $\conj(\alpha_n)$ satisfies
\[
I(\mu)\leq \liminf_{n\to\infty}
-\frac{1}{d_n(d_n-1)}\log|\operatorname{Disc}(\alpha_n)|\leq 0.
\]
Since $\equi$ is the unique measure in $\mathcal P(K)$ satisfying $I(\equi) \leq 0$,
it follows that $\equi$ is the unique arithmetic probability measure in this case.
In fact, such convergence holds under the weaker assumption that the height
of $\alpha_n$ relative to $K$ tends to zero; see \cite{Bi, Ru, Pritsker:crelle11}.
\smallskip

When $\operatorname{cap}(K)>1$,
one may naturally try to construct arithmetic probability measures by taking the weak$^*$ closure of the 
set of equilibrium measures of compact subsets of $K$ that are symmetric with respect to the real axis 
and have logarithmic capacity equal to one.
This construction, however, produces only a restricted class of arithmetic probability 
measures, namely, symmetric measures $\mu\in\mathcal P(K)$ satisfying
\[
U^\mu(z)\leq 0 \qquad \text{for every } z\in\mathbb{C}.
\]
Since there are arithmetic probability measures whose potentials are positive at some points, 
the class of compactly supported symmetric probability measures with nonpositive potentials 
is not dense in the space of all compactly supported arithmetic probability measures.
We refer the reader to \cite{NT:cdmcav25, BMQS:cgemh26} for further details.
\end{remark}

\begin{remark}\label{rem:smallt}
The second condition in Theorem~\ref{ct:suff} reflects the position of the Galois conjugates of 
$\alpha$ relative to $K$. To make this dependence explicit, define
\[
h_K(\alpha):= \frac{1}{\deg \alpha} \Big (\log |\ell_{\alpha}|
+\sum_{\gamma \in \conj(\alpha)} g_K(\gamma, \infty) \Big).
\]
By Lemma~\ref{L:jensen}, with $Q= P_{\alpha}$, the second condition in Theorem~\ref{ct:suff} becomes
\[
(1-t)\Big (\log \capa(K)+ h_K(\alpha) - \frac{1}{\deg \alpha}\log^+ \minalp \Big ) \leq \log \capa(K).
\]
Thus, the size of $h_K(\alpha)$ directly influences the restriction on $t$.
Its geometric dependence on the location of the conjugates is particularly transparent when $K$ is regular. 
In this case, $g_K(\gamma, \infty) \to 0$ as $\gamma$ approaches $K$ from the unbounded 
component, while $g_K(\gamma, \infty) \to \infty$ as $|\gamma| \to \infty$.
Hence, for algebraic integers, $h_K(\alpha)$ becomes small when all the Galois conjugates lie close to
$K$, whereas conjugates lying farther from $K$ contribute more substantially to
$h_K(\alpha)$.
\smallskip

Now suppose that $\capa(K)>1$, without assuming regularity, and $t_{\min}$ denote the threshold 
imposed by the second condition. Since $\log^+ \minalp \geq 0$,
\[
t_{\min} \leq \frac{h_K(\alpha)}{\log \capa(K)+h_K(\alpha)}.
\]
Consequently, $t_{\min} \to 0$ as $h_K(\alpha) \to 0$.
Hence, as $h_K(\alpha)$ decreases to 0, the second condition becomes progressively 
less restrictive, and the first condition becomes the more significant constraint.
Moreover, the Fekete--Szeg\H{o} results recalled in Section~\ref{S:intro} 
show that this limiting regime is non-vacuous:
if $\capa(K) \geq 1$, there exists a sequence of algebraic integers $\{\alpha_n\}$
with
$$h_K(\alpha_n) \to 0.$$
The quantity $h_K(\alpha)$ has been studied in 
various works; see, for instance, \cite{Ru, Pritsker:crelle11, LevLon:evoft24}.
\end{remark}

We now show that, in Theorem~\ref{ct:ns}, neither of the two conditions
\[\int \log |P_{\alpha}(z)| \, d \mu_t(z) \geq 0 \quad {\text{and}} \quad
\int \log |P_{\beta}(z)| \, d \mu_t(z) \geq 0\]
can be omitted. In other words, the two conditions are logically independent: each can hold while the other fails. 
We illustrate this in the case $\alpha=0$ and $\beta=1$.
In this setting, $P_0(z)=z$ and $P_1(z)=z-1$, and
$$\mu_t= t \equi + (1-t) \omega(0, K).$$
\begin{example}\label{Ex:both_condi}
We first show that the inequality corresponding to $P_0(z)=z$ alone is not sufficient
to ensure that $\mu_t$ satisfies all the inequalities in \eqref{E:infinite}.
Let $K=[1,5]$. Since $\capa (K)=1$, as observed  in
Remark~\ref{rem:negativepotential}, the equilibrium measure $\nu_K$
is the unique measure on $K$ satisfying all the inequalities in \eqref{E:infinite}.
Since $\log|z|>0$ on $(1,5]$, we have
\[
\int \log |z|\,d\nu_K(z)>0.
\]
By continuity of the family $\mu_t$ and the fact that $\mu_1=\nu_K$, there
exists $0\leq t<1$ such that
\[
\int \log |z|\,d\mu_t(z)\geq0.
\]
However, $\mu_t$ cannot satisfy all the inequalities in \eqref{E:infinite} for any
$t<1$, since $\nu_K$ is the unique measure on $K$ with this property. 
Thus, the inequality corresponding to $P_0$ can hold for some $t <1$ without implying that 
$\mu_t$ satisfies all the inequalities in \eqref{E:infinite}.
\smallskip

We next show that the inequality corresponding to $P_1(z)=z-1$ alone is likewise not sufficient.
Let $K=[\epsilon,\epsilon+8]$, where $\epsilon>0$ will be chosen later.
By Lemmas~\ref{L:jensen}~and~\ref{L:harm},
\[
\int \log |z-1| \, d \mu_t(z)= t \log \capa (K)- (1-t) g_K(0, \infty)
=t \log 2- (1-t) g_K(0, \infty).
\]
As $\epsilon \to 0$, we have $g_K(0, \infty) \to 0$. 
Thus the values of $t$ for which this integral is nonnegative can be
chosen arbitrarily close to zero.
On the other hand, using a limiting version of Lemma~\ref{L:harm} together with the
inversion relation for the Green function, we obtain
\[
\int \log |z|\, d\harmz(z)= \lim_{y \to 0 } (g_K(y, 0)+ \log |y|)- g_K(0, \infty)= - \log \capa (1/K)- g_K(0, \infty),
\]
where $1/K= [1/(8+\epsilon), 1/\epsilon]$ denotes the inverted set. Therefore,
\[
\int \log |z| \, d \mu_t(z)=t \big (\log 2 + g_K(0, \infty)\big)- (1-t) \big (\log \capa (1/K)+ g_K(0, \infty)\big).
\]
Since $\log \capa (1/K) \to \infty$ and $g_K(0, \infty) \to 0$ as $\epsilon \to 0$,
we can choose $\epsilon > 0$ sufficiently small and a
value of $t$ for which
$$\int \log |z-1| \, d \mu_t(z) \geq 0 \text{ \ while } \int \log |z|\, d\mu_t(z)<0.$$
Thus, the inequality corresponding to $P_1$ can hold even though the inequality corresponding to $P_0$ fails.
Thus, the inequality corresponding to $P_1$ alone does not suffice.
\smallskip

Together, these two constructions show that neither of the two logarithmic inequalities in Theorem~\ref{ct:ns}
can be omitted.
\hfill $\blacktriangleleft$
\end{example}

We conclude this section with the example of a circle, for which the potential-theoretic quantities 
appearing above admit explicit formulas. This provides a concrete illustration of the arithmetic probability 
measures.
 
\begin{example}\label{Ex:notnecce}
Let 
\[
K=\{z\in\mathbb{C}:|z|=r\}.
\]
Its logarithmic capacity is $\capa(K)=r$, and its Green function with pole at infinity is
\[
g_K(z,\infty)=\log^+ \frac{|z|}{r}.
\]
By rotational symmetry, the equilibrium measure is the normalized arclength measure on $K$.
The harmonic measure is also explicit. If $|a|\neq r$, then
\[
d\harma(re^{i\theta})
=
\frac{1}{2\pi}
\frac{\left|r^2-|a|^2\right|}
{\left|re^{i\theta}-a\right|^2}
\,d\theta.
\]
Thus, for a measurable set $E\subset K$, we have
\[
\harma(E)
=
\frac{1}{2\pi}
\int_{\{\theta:\,re^{i\theta}\in E\}}
\frac{\left|r^2-|a|^2\right|}
{\left|re^{i\theta}-a\right|^2}
\,d\theta.
\]
Taking the limit as $|a| \to \infty$, we obtain the harmonic measure with respect to $\infty$:
\[
d\harminfty (re^{i\theta})
=\frac{d\theta}{2\pi},
\]
which agrees with the normalized arclength measure on $K$.
\smallskip

We next give an explicit expression for the integral involving $P_{\alpha}$ in this setting.
Let $\gamma_1, \dots, \gamma_{\deg \alpha}$ denote the Galois conjugates of $\alpha$,
and define
\[
\tau_i=
\begin{cases}
\gamma_i/r, & |\gamma_i|< r,\\[5pt]
r/\overline{\gamma_i}, &  |\gamma_i|> r.
\end{cases}
\]
Then
\[
\int \log|P_\alpha(z)|\, d\mu_t(z)= \log |\ell_{\alpha}|+ \sum_{i}\log \max\{r, |\gamma_i|\}
+\frac{1-t}{\deg \alpha}\sum_{i} \sum_{j} |1-\tau_i \overline{\tau_j}|.
\]
These explicit formulas, together with our results, provide a concrete 
family of arithmetic probability measures on a circle.
\hfill $\blacktriangleleft$
\end{example}

\medskip

\section{Proofs of Theorems~\ref{ct:suff} and \ref{ct:ns}}\label{S:proofs}

In this section, we prove Theorems~\ref{ct:suff} and \ref{ct:ns}, together with their corollaries. 
We begin by recalling the standing setting. Let 
$K\subset\mathbb C$ be a compact set with $\capa(K)>0$, symmetric with respect to the real axis, 
and let $\alpha$ be an algebraic number satisfying
\[
\conj(\alpha)\cap K=\emptyset.
\]
For each $t\in[0,1]$, let $\mu_t(\alpha,K)$ be the measure defined in \eqref{E:conv_measure}.
\smallskip

We first establish a lower bound
for logarithmic integrals with respect to $\mu_t(\alpha,K)$.
The proof of the following theorem relies on the decomposition of $\mu_t(\alpha,K)$ 
into its equilibrium and harmonic components, together with the nonnegativity of the 
Green function and the integrality of the resultant.

\begin{lemma}\label{L:lower_arith}
For $t \in [0,1]$, let $\mu_t:= \mu_t(\alpha, K)$ denote the probability measure defined in \eqref{E:conv_measure}.
If $Q \in \Z[z]$ satisfies $Q(\alpha)\neq 0$, then
\[
\frac{1}{\deg Q}\int \log |Q(z)| \, d \mu_t(z)
\geq t \log \capa(K)- \frac{1-t}{\deg \alpha} \Big(\log |\ell_{\alpha}| + \sum_{\gamma \in \conj(\alpha)}  
g_K(\gamma, \infty) \Big),
\]
where $\ell_{\alpha}$ denotes
the leading coefficient of the polynomial $P_{\alpha}$.
\end{lemma}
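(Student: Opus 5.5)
Since $\mu_t$ is a convex combination, I will split the integral into its equilibrium and harmonic parts:
\[
\int \log|Q|\,d\mu_t = t\int \log|Q|\,d\nu_K + \frac{1-t}{\deg\alpha}\sum_{\gamma\in\conj(\alpha)}\int \log|Q|\,d\omega(\gamma,K).
\]
I will bound the two parts separately, using Lemmas~\ref{L:jensen} and~\ref{L:harm}.

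For the equilibrium part, Lemma~\ref{L:jensen} gives
\[
\int\log|Q|\,d\nu_K=\log|\ell_Q|+\deg Q\log\capa(K)+\sum_{Q(r)=0}g_K(r,\infty).
\]
Because $Q\in\Z[z]$ is nonzero, $|\ell_Q|\ge1$. The Green function is nonnegative, so the sum over zeros is $\ge 0$. Hence this integral is at least $\deg Q\log\capa(K)$.

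For the harmonic part, note that $Q(\gamma)\neq0$ for every conjugate $\gamma$, since $Q(\alpha)\ne0$ and $Q$ has rational coefficients. Lemma~\ref{L:harm} then applies with $a=\gamma$:
\[
\int\log|Q|\,d\omega(\gamma,K)=\log|Q(\gamma)|+\sum_{Q(r)=0}g_K(r,\gamma)-\deg Q\, g_K(\gamma,\infty).
\]
I drop the nonnegative terms $g_K(r,\gamma)$ and sum over $\gamma\in\conj(\alpha)$. This gives
\[
\sum_\gamma \int\log|Q|\,d\omega(\gamma,K)\ \ge\ \log\Bigl|\prod_\gamma Q(\gamma)\Bigr| - \deg Q\sum_\gamma g_K(\gamma,\infty).
\]

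The key arithmetic step is to bound $\log|\prod_\gamma Q(\gamma)|$ from below using the resultant. Since $P_\alpha$ and $Q$ are coprime, the resultant $\operatorname{Res}(P_\alpha,Q)=\ell_\alpha^{\deg Q}\prod_\gamma Q(\gamma)$ is a nonzero integer. Therefore
\[
\log\Bigl|\prod_\gamma Q(\gamma)\Bigr|\ge -\deg Q\log|\ell_\alpha|.
\]
Combining this with the previous display, the harmonic contribution is at least
\[
-\frac{1-t}{\deg\alpha}\deg Q\Bigl(\log|\ell_\alpha|+\sum_\gamma g_K(\gamma,\infty)\Bigr).
\]
Adding $t\deg Q\log\capa(K)$ from the equilibrium part and dividing by $\deg Q$ gives the claimed inequality.

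The degenerate case $\deg Q=0$ has to be read as a convention, since the statement divides by $\deg Q$. There $Q$ is a nonzero integer constant, so $\int\log|Q|\,d\mu_t\ge0$ directly.

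The main point requiring care is the resultant step, and in particular getting the power of $\ell_\alpha$ right: it is $\ell_\alpha^{\deg Q}$, with the product taken over the roots of $P_\alpha$. A second, minor check is that Lemma~\ref{L:harm} holds with possibly infinite Green function values at irregular points or at zeros of $Q$ lying in bounded components. This causes no problem, because the terms involved are nonnegative and are only discarded.
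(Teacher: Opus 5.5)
Your proposal is correct and follows essentially the same route as the paper. You split $\mu_t$ into its equilibrium and harmonic parts, bound the first via Lemma~\ref{L:jensen} using $|\ell_Q|\ge 1$ and $g_K\ge 0$, bound the second via Lemma~\ref{L:harm} by discarding $g_K(r,\gamma)\ge 0$, and conclude using that $\res(Q,P_\alpha)=\pm\ell_\alpha^{\deg Q}\prod_{\gamma}Q(\gamma)$ is a nonzero integer. (Your closing worry about infinite Green function values is unnecessary: $g_K(\cdot,\gamma)$ is finite away from $\gamma$, and $Q(\gamma)\neq 0$.)
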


\begin{proof}
Let $Q\in \Z[z]$ satisfy $Q(\alpha)\neq0$. We estimate the two components of $\mu_t$ separately.
By definition, the Green function $g_K(r, \infty)$ is nonnegative for every $r \in\C$.
Since $Q\in\Z[z]$, its leading coefficient satisfies $|\ell_Q|\ge1$. 
Thus, Lemma~\ref{L:jensen} gives
\begin{equation}\label{E:ineq_equi}
\int \log |Q(z)| \, d {\equi}(z) \geq  \deg Q \log \capa(K).
\end{equation}
Let $\gamma\in\conj (\alpha)$. By the standing assumption, $\gamma\notin K$.
Thus, by definition of the Green function, $g_K(r,\gamma)\ge0$ for every $r\in\mathbb C$.
Furthermore, since $Q\in\Z[z]$ and $Q(\alpha)\neq0$, we have $Q(\gamma)\neq0$.
Therefore, Lemma~\ref{L:harm} gives
\begin{equation}\label{E:ineq_unbou}
\int \log |Q(z)| \, d {\harm}(z) \geq \log |Q(\gamma)|- \deg Q \cdot g_K (\gamma, \infty). 
\end{equation}
Summing \eqref{E:ineq_unbou} over all $\gamma\in\mathcal O(\alpha)$ and using
\[
|\res (Q, P_{\alpha})|=|\ell_{\alpha}|^{\deg Q}\prod_{\gamma \in \conj(\alpha)} |Q(\gamma)|,
\]
we obtain
\[
\sum_{\gamma \in \conj(\alpha)} \int \log |Q(z)| \, d {\harm}(z) 
\geq
\log \frac{|\res (Q, P_{\alpha})|}{|\ell_{\alpha}|^{\deg Q}} 
- \deg Q \sum_{\gamma \in \conj(\alpha)} g_K(\gamma, \infty).
\]
Since $Q$ and $P_{\alpha}$
have integer coefficients, $\res (Q, P_{\alpha})$ is an integer.
Because $Q(\gamma) \neq 0$ for every $\gamma \in \conj(\alpha)$, 
the resultant is nonzero, and hence
\[
|\res (Q, P_{\alpha})| \geq 1.
\]
Thus the logarithmic term involving the resultant is nonnegative and 
may be discarded. 
Combining the resulting inequality with \eqref{E:ineq_equi} and the definition of $\mu_t$ gives
\[
\int \log |Q(z)| \, d \mu_t(z)
\geq t  \deg Q \log \capa(K)
-\frac{1-t}{\deg \alpha} \Big(\deg Q \log |\ell_{\alpha}| 
+ \deg Q \sum_{\gamma \in \conj(\alpha)} g_K(\gamma, \infty) \Big).
\]
Dividing by $\deg Q$ proves the lemma.
\end{proof}

We first prove
Theorem~\ref{ct:ns}, which will follow as an immediate consequence of the more general result below.  
Recall from Section~\ref{S:prelim} that $K$ is regular if every connected component of
$\C \setminus K$ is regular. 
In particular, the regularity condition appearing below will be automatic 
under the hypothesis of Theorem~\ref{ct:ns}.

\begin{theorem}\label{T:ns}
Suppose $K$ is regular and there exists an algebraic integer $\beta$ such that
$P_{\alpha}(\beta)$ is an algebraic unit, and each element of $\conj(\beta)$ either lies in $K$ or in a bounded 
connected component of $\C\setminus K$ containing no conjugate of $\alpha$.
Then $\mu_t:=\mu_t(\alpha,K)$ satisfies all the inequalities in \eqref{E:infinite}
if and only if
\[\int \log |P_{\alpha}(z)| \, d \mu_t(z) \geq 0 \quad {\text{and}} \quad
\int \log |P_{\beta}(z)| \, d \mu_t(z) \geq 0.\]
\end{theorem}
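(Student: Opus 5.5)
The forward implication is immediate: $P_{\alpha}$ and $P_{\beta}$ are nonzero integer polynomials, so if $\mu_t$ satisfies all the inequalities in \eqref{E:infinite}, it satisfies these two in particular. For the converse, I will take an arbitrary nonzero $Q\in\Z[z]$ and factor it as $Q=P_{\alpha}^k R$ with $k\ge 0$, $R\in\Z[z]$ and $R(\alpha)\neq 0$. This is possible because $P_{\alpha}$ is primitive and irreducible, so Gauss's lemma keeps $R$ integral. Then
\[
\int\log|Q|\,d\mu_t=k\int\log|P_{\alpha}|\,d\mu_t+\int\log|R|\,d\mu_t .
\]
The first term is nonnegative by hypothesis. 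So it suffices to show $\int\log|R|\,d\mu_t\ge 0$ for every $R\in\Z[z]$ with $R(\alpha)\neq0$. If $R$ is a nonzero constant this is trivial, so assume $\deg R\ge1$.

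For such $R$, Lemma~\ref{L:lower_arith} gives
\[
\frac{1}{\deg R}\int \log|R|\,d\mu_t\ \ge\ \Lambda_t,\qquad
\Lambda_t:=t\log\capa(K)-\frac{1-t}{\deg\alpha}\Big(\log|\ell_\alpha|+\sum_{\gamma\in\conj(\alpha)}g_K(\gamma,\infty)\Big).
\]
The plan is to show that $P_{\beta}$ attains this lower bound exactly, i.e.
\[
\int\log|P_{\beta}|\,d\mu_t=\deg\beta\cdot\Lambda_t .
\]
Once this holds, the hypothesis $\int\log|P_{\beta}|\,d\mu_t\ge0$ gives $\Lambda_t\ge0$, and hence $\int\log|R|\,d\mu_t\ge0$ for every admissible $R$. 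This finishes the proof. Note that $P_{\beta}(\alpha)\neq0$: otherwise $P_{\alpha}=P_{\beta}$ up to a constant, and then $P_\alpha(\beta)=0$, which is not a unit.

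To verify the equality, I will go back through the proof of Lemma~\ref{L:lower_arith} and check that every discarded term vanishes for $Q=P_{\beta}$. There are four such terms.
\begin{enumerate}[label=(\arabic*)]
\item The leading coefficient satisfies $\ell_{P_\beta}=1$, since $\beta$ is an algebraic integer.
\item For each zero $\beta_j$ of $P_{\beta}$, $g_K(\beta_j,\infty)=0$. Indeed, $\beta_j$ lies either in $K$, where the Green function vanishes because $K$ is regular, or in a bounded component of $\C\setminus K$, which lies outside the closure of the unbounded component; there the extension \eqref{E:extension} is $0$.
\item For each $\gamma\in\conj(\alpha)$ and each $j$, $g_K(\beta_j,\gamma)=0$. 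This holds by regularity if $\beta_j\in K$. Otherwise $\beta_j$ lies in a bounded component containing no conjugate of $\alpha$, hence outside the closure of the component $\Omega\ni\gamma$, so again the extended Green function is $0$.
\item The resultant term vanishes. We have $|\ell_\alpha|^{\deg\beta}\prod_{\gamma}|P_{\beta}(\gamma)|=\prod_j|P_{\alpha}(\beta_j)|=|N(P_{\alpha}(\beta))|$, where $N$ is the norm from $\Q(\beta)$. This equals $1$ because $P_{\alpha}(\beta)$ is an algebraic unit. So $\log|\res(P_\beta,P_\alpha)|=0$.
\end{enumerate}
With these four facts, Lemmas~\ref{L:jensen} and~\ref{L:harm} turn into exact identities that sum to $\deg\beta\cdot\Lambda_t$.

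I expect the main obstacle to be the Green-function vanishing in steps (2) and (3). This needs care with the extension convention \eqref{E:extension} and with how components of $\sph\setminus K$ are handled. In particular, I must check that points of $K$ which lie on the boundary of a component $\Omega$ are regular for that $\Omega$, so that the $\limsup$ in \eqref{E:extension} is $0$; this is exactly what the regularity hypothesis on $K$ supplies. I would also check the sign and normalization conventions in the resultant identity, since the argument relies on it being exactly $\pm1$.
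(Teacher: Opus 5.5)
Your proposal is correct and follows essentially the same route as the paper: you show that $P_\beta$ attains the lower bound of Lemma~\ref{L:lower_arith} exactly, using that $P_\beta$ is monic, that $g_K(\eta,\infty)$ and $g_K(\eta,\gamma)$ vanish on $\conj(\beta)$ by regularity and the extension convention, and that $|\res(P_\beta,P_\alpha)|=1$ via the norm of the unit $P_\alpha(\beta)$; you then handle $Q(\alpha)=0$ by factoring $Q=P_\alpha^kR$. Your extra checks that $P_\beta(\alpha)\neq 0$ and that a constant $R$ is trivial fill small gaps that the paper leaves implicit.
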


\begin{proof}
Since $P_{\alpha}, P_{\beta} \in \Z[z]$, the forward implication is immediate. 
For the converse, we first show that, for every nonzero
$Q \in \Z[z]$ with $Q(\alpha)\neq 0$, 
\begin{equation}\label{E:ineq_beta}
\frac{1}{\deg Q} \int \log |Q(z)| \, d \mu_t(z) \geq \frac{1}{\deg P_{\beta}} \int \log |P_{\beta}(z)| \, d \mu_t(z).
\end{equation}
We show that the lower bound in Lemma~\ref{L:lower_arith} is attained by $P_{\beta}$.
Since $\beta$ is an algebraic integer, $P_{\beta}$ is monic. 
Moreover, since $K$ is regular, $g_K(z,\infty)=0$ for every $z$ outside the unbounded component of 
$\C\setminus K$. By hypothesis, every element of $\conj(\beta)$ lies outside this unbounded component.
Hence
\[
g_K(\eta, \infty)=0 \quad \text {for every } \eta \in \conj(\beta).
\]
Hence Lemma~\ref{L:jensen} gives
\[
\int \log |P_{\beta}(z)| \, d {\equi}(z) = \deg P_{\beta} \log \capa(K).
\]
Now let $\gamma \in \conj(\alpha)$.
By hypothesis, no element of $\conj(\beta)$ lies in the same component of $\C \setminus K$ as $\gamma$.
Thus the regularity of $K$ implies that
\[
g_K(\eta, \gamma)=0 \quad \text {for every } \eta \in \conj(\beta).
\]
Since $P_{\beta}(\gamma) \neq 0$, Lemma~\ref{L:harm} gives
\[
\int \log |P_{\beta}(z)| \, d {\harm}(z) = \log |P_{\beta}(\gamma)|- \deg P_{\beta} \cdot g_{K} (\gamma, \infty). 
\]
Since $P_\beta$ is monic,
$\res (P_\beta,P_\alpha) =N_{\mathbb Q(\beta)/\mathbb Q}\bigl(P_\alpha(\beta)\bigr)$.
By assumption, $P_\alpha(\beta)$ is an algebraic unit, so its norm is $\pm1$. Therefore,
\[
|\res (P_{\beta}, P_{\alpha})|=1.
\]
Combining these equalities with the definition of $\mu_t$, we obtain
\[
\frac{1}{\deg P_{\beta}}\int \log |P_{\beta}(z)| \, d \mu_t(z)
= t \log \capa(K)- \frac{1-t}{\deg \alpha} \Big(\log |\ell_{\alpha}| + 
\sum_{\gamma \in \conj(\alpha)} g_K(\gamma, \infty)\Big).
\]
This is precisely the lower bound in Lemma~\ref{L:lower_arith}. Hence \eqref{E:ineq_beta} follows.
By hypothesis, the right hand side of \eqref{E:ineq_beta} is nonnegative. Thus
 \[
 \int \log |Q(z)| \, d \mu_t(z) \geq 0
 \]
 for every $Q\in \Z[z]$ satisfying $Q(\alpha)\neq 0$.
\smallskip

It remains to consider a nonzero $Q \in \Z[z]$ with $Q(\alpha) = 0$. 
Since $P_{\alpha}$ is irreducible and $P_{\alpha}(\alpha)=0$, we can write
$$
Q(z)= P_{\alpha}(z)^k R(z)
$$
for some $k \geq 1$ and $R \in \Z[z]$ 
satisfying $R(\alpha) \neq 0$. Therefore,
\[
\int \log |Q(z)| \, d \mu_t(z) = k \int \log |P_{\alpha}(z)| \, d \mu_t(z) + \int \log |R(z)| \, d \mu_t(z). 
\]
The first term is nonnegative by hypothesis, while the second is nonnegative by the preceding argument,
since $R(\alpha) \neq 0$. Hence
$$
\int \log |Q(z)| \, d \mu_t(z)\geq 0.
$$
Therefore, $\mu_t$ satisfies all the inequalities in \eqref{E:infinite}.
\end{proof}

We now deduce Theorem~\ref{ct:ns} from Theorem~\ref{T:ns}.

\begin{proof}[Proof of Theorem~\ref{ct:ns}]
Since every connected
component of $K$ contains more than one point, by \cite[Theorem 3.8.3]{ransford:ptCp95},
each connected component $C$ is non-thin at every point of $\overline C=C$.
Hence $K$ is non-thin at every point of $K$. 
By \cite[Theorem 4.2.4]{ransford:ptCp95}, $K$ is regular, and hence Theorem~\ref{T:ns} applies.
\end{proof}

\begin{remark}
A careful reader will observe that the full strength of the regularity assumption on $K$ is not needed 
in the proof of Theorem~\ref{T:ns}. It suffices to require that every ellement of $\conj(\beta)$ lying in $\partial K$ 
be a regular point. We assume $K$ regular only to simplify the statement.
\end{remark}

We next consider the particularly important case
$\alpha=0$. In this case,
\[
\mu_t(0,K)=t\nu_K+(1-t)\harmz,
\]
and such measures arise naturally in connection with the Schur--Siegel--Smyth trace problem;
see \cite{Smith:aicpd24, LL:SSS26}. 
Theorem~\ref{T:ns} then gives the following criterion.

\begin{corollary}\label{C:units}
Suppose K is regular and there exists an algebraic unit $\beta$ such that 
each element of $\conj(\beta)$ either lies in $K$ or
in a bounded component of $\C \setminus K$ not containing 0.
Then $\mu_t:= \mu_t (0, K)$
is an arithmetic probability measure if and only if
\[
\int \log |z|\,d\mu_t(z)\geq 0
\quad\text{and}\quad
\int \log |P_{\beta}(z)| \,d\mu_t(z)\geq 0.
\]
\end{corollary}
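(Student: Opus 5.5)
Corollary~\ref{C:units} is the special case $\alpha=0$ of Theorem~\ref{T:ns}, so the plan is to check that each hypothesis of that theorem holds in this case and then combine it with Result~\ref{R:APM_char}.

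\emph{The measure and the standing assumption.} For $\alpha=0$ the minimal polynomial is $P_0(z)=z$, which is primitive, irreducible and has positive leading coefficient. Also $\conj(0)=\{0\}$ and $\deg 0=1$. Substituting into \eqref{E:conv_measure} gives $\mu_t(0,K)=t\nu_K+(1-t)\harmz$, which is the measure in the statement. The standing assumption $\conj(\alpha)\cap K=\emptyset$ becomes $0\notin K$. I would note that this assumption is implicit in the corollary, since it is needed for $\harmz$ to be defined.

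\emph{Verifying the hypotheses of Theorem~\ref{T:ns}.}
\begin{itemize}
\item Regularity of $K$ is assumed directly.
\item Since $\beta$ is an algebraic unit, it is in particular an algebraic integer, and $P_\alpha(\beta)=P_0(\beta)=\beta$ is a unit.
\item Each conjugate of $\beta$ lies either in $K$ or in a bounded component of $\C\setminus K$ not containing $0$. Since the only conjugate of $\alpha$ is $0$, such a component contains no conjugate of $\alpha$, which is exactly the location condition in Theorem~\ref{T:ns}.
\end{itemize}

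\emph{Conclusion.} Theorem~\ref{T:ns} now says that $\mu_t$ satisfies all the inequalities in \eqref{E:infinite} if and only if
\[
\int\log|z|\,d\mu_t\ge 0 \quad\text{and}\quad \int\log|P_\beta|\,d\mu_t\ge0 .
\]
The measure $\mu_t$ lies in $\mathcal P(K)$ and is symmetric, because $K$ is symmetric about the real axis and $0$ is real. Therefore Result~\ref{R:APM_char} turns "satisfies \eqref{E:infinite}" into "is an arithmetic probability measure," which gives the stated equivalence.

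There is no real obstacle here; the only care needed is in checking the symmetry of $\mu_t$ and the implicit assumption $0\notin K$.
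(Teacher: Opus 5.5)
Your proposal is correct and follows the paper's route: the paper obtains Corollary~\ref{C:units} by specializing Theorem~\ref{T:ns} to $\alpha=0$, so that $P_0(z)=z$, $P_0(\beta)=\beta$ is a unit, and the location condition on $\conj(\beta)$ becomes the one stated. Result~\ref{R:APM_char} then turns the inequalities into the arithmetic-probability-measure property. Your remarks on the implicit assumption $0\notin K$ and on the symmetry of $\mu_t$ are exactly the bookkeeping this specialization needs.
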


We next derive from Lemma~\ref{L:lower_arith} a criterion that does not involve the auxiliary 
algebraic integer $\beta$. Under the following additional hypotheses, the lower bound in
Lemma~\ref{L:lower_arith} simplifies substantially. 
We restrict to the case $\capa(K)\geq 1$, since when $\capa(K)<1$, 
no probability measure on $K$ satisfies all the inequalities in \eqref{E:infinite}.

\begin{corollary}
Let $\capa(K) \geq 1$, and let $\alpha$ be an algebraic integer such that no element of $\conj(\alpha)$ 
lies in the unbounded component of $\mathbb{C}\setminus K$. 
Then $\mu_t:=\mu_t(\alpha,K)$ satisfies all the inequalities in \eqref{E:infinite}
if and only if
\[
\int \log |P_{\alpha}(z)|\,d\mu_t(z)\geq 0.
\]
\end{corollary}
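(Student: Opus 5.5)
The forward implication is immediate, since $P_\alpha\in\Z[z]$. For the converse, I will follow the structure of the proof of Theorem~\ref{T:ns}, but use Lemma~\ref{L:lower_arith} directly instead of an auxiliary $\beta$. First I handle nonzero $Q\in\Z[z]$ with $Q(\alpha)\neq0$. Because $\alpha$ is an algebraic integer, $P_\alpha$ is monic, so $\log|\ell_\alpha|=0$. Because no conjugate $\gamma\in\conj(\alpha)$ lies in the unbounded component of $\C\setminus K$, I expect $g_K(\gamma,\infty)=0$ for each such $\gamma$. By the extension convention \eqref{E:extension}, $g_K(\cdot,\infty)=g_\Omega(\cdot,\infty)$, where $\Omega$ is the unbounded component of $\sph\setminus K$, and this function is set to $0$ off $\overline\Omega$. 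The standing assumption $\conj(\alpha)\cap K=\emptyset$ places each $\gamma$ in a bounded component of $\C\setminus K$, which is disjoint from $\overline\Omega$. So $g_K(\gamma,\infty)=0$ holds without any regularity hypothesis.

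With these two simplifications, Lemma~\ref{L:lower_arith} gives
\[
\frac{1}{\deg Q}\int\log|Q(z)|\,d\mu_t(z)\ \ge\ t\log\capa(K).
\]
This is nonnegative because $\capa(K)\ge1$. Hence $\int\log|Q|\,d\mu_t\ge0$ for every such $Q$, and this holds for every $t\in[0,1]$, not only for those $t$ where the $P_\alpha$ inequality holds.

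Next I treat $Q$ with $Q(\alpha)=0$ exactly as in Theorem~\ref{T:ns}. I write $Q=P_\alpha^kR$ with $k\ge1$, $R\in\Z[z]$ and $R(\alpha)\ne0$; Gauss's lemma justifies this because $P_\alpha$ is primitive and irreducible. Additivity of the logarithm gives $\int\log|Q|\,d\mu_t=k\int\log|P_\alpha|\,d\mu_t+\int\log|R|\,d\mu_t$. The first term is nonnegative by hypothesis. The second is nonnegative by the previous step, provided $R$ is not a nonzero constant. If $R$ is constant, $\log|R|\ge0$ because $R$ is a nonzero integer.

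The only delicate point is the claim $g_K(\gamma,\infty)=0$ for conjugates in bounded components, together with the degenerate case $\deg Q=0$ in Lemma~\ref{L:lower_arith}. The constant case is trivial, since $\log|Q|\ge0$ for a nonzero integer $Q$. The Green-function claim follows from the definitional extension, so I expect no real obstacle. The proof is a direct specialization of Lemma~\ref{L:lower_arith}.
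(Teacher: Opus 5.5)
Your proposal is correct and matches the paper's proof: you specialize Lemma~\ref{L:lower_arith} using $\ell_\alpha=1$ and $g_K(\gamma,\infty)=0$ for conjugates lying in bounded components, which gives the bound $t\log\capa(K)\ge0$, and you treat $Q(\alpha)=0$ via the factorization $Q=P_\alpha^kR$. Your additional remarks are sound and only make explicit what the paper leaves implicit: that $g_K(\gamma,\infty)=0$ follows from the extension convention \eqref{E:extension} without any regularity assumption, and that the case of a constant $R$ is trivial.
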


\begin{proof}
The necessity is immediate. For the converse, let $Q \in \Z[z]$ satisfy $Q(\alpha) \neq 0$.
By Lemma~\ref{L:lower_arith}, we have
\[
\frac{1}{\deg Q}\int \log |Q(z)| \, d \mu_t(z)
\geq t \log \capa(K)- \frac{1-t}{\deg \alpha} \Big(\log |\ell_{\alpha}| + \sum_{\gamma \in \conj(\alpha)}  
g_K(\gamma, \infty) \Big).
\] 
Since $\alpha$ is an algebraic integer, $\ell_{\alpha}=1$.
Moreover, the standing assumption $\conj(\alpha)\cap K=\emptyset$, 
together with the hypothesis of the corollary, implies that every conjugate of $\alpha$ lies in a 
bounded component of $\C\setminus K$. Hence
$g_K(\gamma, \infty)=0$ for every $\gamma \in \conj(\alpha)$. Therefore,
\[
\frac{1}{\deg Q}\int \log |Q(z)| \, d \mu_t(z)\geq t \log \capa(K) \geq 0,
\]
since $\capa(K) \geq 1$.
If $Q(\alpha)=0$, factor $Q=P_\alpha^kR$ as in the proof of Theorem~\ref{T:ns}, 
with $k\ge1$ and $R \in \Z[z]$ satisfying $R(\alpha)\neq0$. The required inequality then follows from the hypothesis 
for $P_\alpha$ and the preceding argument for $R$.
\end{proof}

We now turn to the proof of Theorem~\ref{ct:suff}.
The main step is to obtain a lower bound for the logarithmic integral of an arbitrary polynomial
$Q\in\mathbb Z[z]$ in terms of the minimum
\[
\minalp:= \min_{z \in K} |P_{\alpha}(z)|=\min_{z \in \partial K} |P_{\alpha}(z)|.
\]
introduced in Section~\ref{S:intro}. This quantity captures the position of the Galois conjugates of 
$\alpha$ relative to $\partial K$.
We first recall the following version of the generalized minimum principle.
\begin{result}[\cite{ransford:ptCp95}, Theorem 3.6.9]
Let $\Omega\subset\C$ be a domain with $\capa(\partial \Omega)>0$,
and let $u$ be superharmonic in $\Omega$ and bounded below. If
\[\liminf_{z\to\zeta}u(z)\geq 0\] for quasi every $\zeta\in\partial\Omega$, then \[u(z)\geq 0,\qquad z\in\Omega.\]
\end{result}

We now establish the required lower bound.

\begin{lemma}\label{L:lower_geo}
For $t \in [0,1]$, let $\mu_t:= \mu_t(\alpha, K)$ denote the probability measure defined in \eqref{E:conv_measure}.
If $Q \in \Z[z]$ satisfies $Q(\alpha)\neq 0$, then
\[
\frac{1}{\deg Q}\int \log |Q(z)| \, d \mut(z)
\geq  t \log \capa(K) - \frac{1-t}{\deg \alpha} \bigg(\log \ell_{\alpha} - \log \minalp
+ \sum_{\gamma \in \conj(\alpha)} g_K(\gamma, \infty) \bigg),
\]
where $\ell_{\alpha}$ denotes
the leading coefficient of the polynomial $P_{\alpha}$.
\end{lemma}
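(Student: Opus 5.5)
The plan is to follow the proof of Lemma~\ref{L:lower_arith}, keeping the equilibrium part unchanged and sharpening only the harmonic part. The resultant argument will be replaced by a pointwise lower bound on an auxiliary function built from $P_\alpha$ and the Green functions with poles at the conjugates of $\alpha$. For the equilibrium component I reuse \eqref{E:ineq_equi}, namely $\int\log|Q|\,d\nu_K\ge \deg Q\log\capa(K)$. It therefore suffices to show
\[
\sum_{\gamma\in\conj(\alpha)}\int\log|Q(z)|\,d\harm(z)\;\ge\;\deg Q\Big(\log\minalp-\log\ell_\alpha-\sum_{\gamma\in\conj(\alpha)}g_K(\gamma,\infty)\Big).
\]
Combining this with \eqref{E:ineq_equi} with weights $t$ and $(1-t)/\deg\alpha$, and dividing by $\deg Q$, gives the lemma.

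For the harmonic part, write $Q=\ell_Q\prod_r(z-r)$. Since $Q(\gamma)\neq0$, Lemma~\ref{L:harm} applies to each $\gamma$, and summing over $\gamma$ gives
\[
\sum_\gamma\int\log|Q|\,d\harm=\deg\alpha\log|\ell_Q|+\sum_{r}\sum_\gamma\big(\log|r-\gamma|+g_K(r,\gamma)\big)-\deg Q\sum_\gamma g_K(\gamma,\infty).
\]
Because $|\ell_Q|\ge1$, the first term can be dropped. Using $\sum_\gamma\log|z-\gamma|=\log|P_\alpha(z)|-\log\ell_\alpha$, the claim reduces to the pointwise inequality
\[
u(z):=\log|P_\alpha(z)|+\sum_{\gamma\in\conj(\alpha)}g_K(z,\gamma)\;\ge\;\log\minalp\qquad\text{for all } z\in\C\setminus\conj(\alpha).
\]
Applying this at each root $r$ of $Q$ and summing over the $\deg Q$ roots, counted with multiplicity, gives exactly the required bound. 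Here $g_K(z,\gamma)$ is understood with the extension \eqref{E:extension}, so it vanishes off the closure of the component containing $\gamma$.

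To prove the pointwise inequality I split into three cases according to where $z$ lies.
\begin{enumerate}[label=(\arabic*)]
\item If $z\in K$, then each $g_K(z,\gamma)\ge0$, so $u(z)\ge\log|P_\alpha(z)|\ge\log\minalp$ by the definition of $\minalp$.
\item If $z$ lies in a component $\Omega$ of $\C\setminus K$ containing no conjugate of $\alpha$, then $u=\log|P_\alpha|$ there. This function is harmonic on $\Omega$, since $P_\alpha$ has no zeros in $\Omega$, it is continuous up to $\partial\Omega\subset K$ with boundary values at least $\log\minalp$, and it tends to $+\infty$ at $\infty$ if $\Omega$ is unbounded. The minimum principle then gives the bound.
\item If $\Omega$ contains some conjugates, then near each such $\gamma$ the logarithmic singularity of $g_K(\cdot,\gamma)$ cancels the zero of $P_\alpha$. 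Hence $u$ extends harmonically across $\conj(\alpha)\cap\Omega$ and is harmonic on $\Omega$, except that it tends to $+\infty$ at $\infty$ in the unbounded case. Moreover $u$ is bounded below on $\Omega$. For quasi-every $\zeta\in\partial\Omega$ we have $g_K(z,\gamma)\to0$, so $\liminf_{z\to\zeta}u(z)\ge\log|P_\alpha(\zeta)|\ge\log\minalp$. The generalized minimum principle (Ransford, Theorem~3.6.9), applied to $u-\log\minalp$, gives the bound.
\end{enumerate}

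The main technical obstacle is case (3) when $\Omega$ is unbounded: the quoted Result is stated for domains in $\C$. I would handle this by applying the Result to $\Omega\cap\{|z|<R\}$ for large $R$. Since $u\to+\infty$ as $|z|\to\infty$, the boundary circle contributes values above $\log\minalp$, and the new boundary still has positive capacity. Alternatively, one can use that $u$ is superharmonic on $\Omega\subset\sph$ with $u(\infty)=+\infty$. Two further points need checking: that $u$ is bounded below on each bounded piece, which holds because $u$ is continuous on the closure away from the boundary and bounded below near it by $\log\minalp$ up to the nonnegative Green terms; and that irregular boundary points form only a polar set, which is exactly what ``quasi every'' in the Result allows.
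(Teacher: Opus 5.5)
Your proposal is correct and is essentially the paper's argument. The paper builds a single auxiliary function $H_t$, which also carries the nonnegative term $t\,g_K(z,\infty)$, shows it is harmonic on $\C\setminus\partial K$ with removable singularities at the conjugates, and bounds it below by its boundary values via the generalized minimum principle; you instead discard the equilibrium part first via \eqref{E:ineq_equi} and run the same argument for $\log|P_\alpha|+\sum_\gamma g_K(\cdot,\gamma)$ on the components of $\C\setminus K$, treating $z\in K$ directly, which is only an organizational difference (and nonnegativity of the Green functions already gives your boundary $\liminf$ bound at every point of $\partial\Omega$, not just quasi-everywhere).
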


\begin{proof}
Let $Q$ be a polynomial in $\Z[z]$
satisfying $Q(\alpha)\neq 0$. 
Since $Q$ has integer coefficients, the condition
$Q(\alpha) \neq 0$ implies that $Q(\gamma) \neq 0$ for every $\gamma \in \conj(\alpha)$.
We write $Q$ in its factored form
$$
Q(z)=\ell_Q\prod_{j=1}^{\deg Q}(z-r_j).
$$
By Lemmas~\ref{L:jensen}~and~\ref{L:harm}, we get
\begin{equation}\label{E:jen_harm}
\begin{aligned}
\int \log |Q(z)| \, d \mut(z)
= &t \Big(\log |\ell_Q| + \deg Q \log \capa(K) + \sum_{j=1}^{\deg Q} g_K(r_j, \infty) \Big) \\
&+\frac{1-t}{\deg \alpha}\sum_{\gamma \in \conj(\alpha)}\Big(  \log |Q(\gamma)|- \deg Q g_K(\gamma, \infty) 
+ \sum_{j=1}^{\deg Q} g_K(r_j, \gamma) \Big).
\end{aligned}
\end{equation}
The dependence on $Q$ enters only through its leading coefficient and its zeros. 
To estimate the root dependent contribution, define
\[
H_t(z):= t g_K(z, \infty)+ \frac{1-t}{\deg \alpha} \sum_{\gamma \in \conj(\alpha)} 
\big(\log |z - \gamma|+ g_K(z, \gamma)\big).
\] 
Note that $H_t$ is well defined on $\C \setminus \conj(\alpha)$.
With this notation, \eqref{E:jen_harm} becomes
\begin{equation}\label{E:intQ1}
\begin{aligned}
\frac{1}{\deg Q}\int \log |Q(z)| \, d \mut(z)
= t \log \capa(K) - \frac{1-t}{\deg \alpha} & \sum_{\gamma \in \conj(\alpha)} g_K(\gamma, \infty) \\
&+ \frac{\log|\ell_Q|}{\deg Q} +\frac{1}{\deg Q} \sum_{j=1}^{\deg Q}  H_t(r_j).
\end{aligned}
\end{equation}

We first establish that $H_t$ is harmonic on $\C \setminus \partial K$. 
Fix $\gamma\in \conj(\alpha)$, and let $\Omega_{\gamma}$ be the connected component 
of $\C \setminus \partial K$ containing $\gamma$. 
By condition \ref{I:firstpr}  in the definition of the Green function, $g_K(z, \gamma)$ is harmonic 
on $\Omega_\gamma\setminus{\gamma}$. By condition \ref{I:secondpr}, the function
\[
g_K(z, \gamma) + \log\vert{}z - \gamma\vert{}
\]
is bounded in a neighborhood of $\gamma$. 
Hence the singularity at $z = \gamma$ is removable, and this function extends 
harmonically to all of $\Omega_{\gamma}$.
If $z$ belongs to any other component of $\mathbb{C} \setminus \partial K$, then
$g_K(z, \gamma) = 0$.
Since $z\neq\gamma$ there, $\log|z-\gamma|$ is harmonic as well. We conclude that
\[
g_K(z, \gamma) + \log\vert{}z - \gamma\vert{}
\]
is harmonic throughout $\mathbb{C} \setminus \partial K$.
Since $g_K(z, \infty)$ is also harmonic on $\mathbb{C} \setminus \partial K$,
it follows that $H_t$ is harmonic on $\mathbb{C} \setminus \partial K$. 
\smallskip

We next show that the infimum of $H_t$ over $\mathbb C$ is determined by its values on $\partial K$.
Since
$g_K(z, \infty) \to \infty$ and $\log \vert{}z - \gamma \vert{} \to \infty$ 
as $\vert{}z\vert{} \to \infty$, we have
\[
\lim_{\vert{}z\vert{} \to \infty} H_t(z) = +\infty
\]
independent of the parameter $t \in [0,1]$. 
Now consider an arbitrary connected component $\Omega$ of $\C\setminus\partial K$. \
The function $H_t$ is harmonic and bounded below in $\Omega$, with finite boundary limits 
quasi-everywhere on $\partial\Omega$. The generalized minimum principle therefore gives
\[
\inf_{\Omega}H_t\geq \inf_{\partial\Omega}H_t.
\]
Since $\partial\Omega\subseteq\partial K$, it follows that
\[
H_t(z)\geq\inf_{\zeta\in\partial K}H_t(\zeta),
\qquad z\in\mathbb C.
\]
By the definition of $H_t$ and the nonnegativity of the Green functions,
\[
\inf_{\zeta \in \partial K} H_t(\zeta)
\geq \inf_{\zeta \in \partial K} \frac{1-t}{\deg \alpha} \sum_{\gamma \in \conj(\alpha)} \log |\zeta -\gamma|
= \frac{1-t}{\deg \alpha} (\log \minalp -\log \ell_{\alpha}).
\]
Therefore, we get
\[
H_t(z)
\geq
\frac{1-t}{\deg\alpha}
\bigl(\log\minalp-\log\ell_\alpha\bigr),
\qquad z\in\mathbb C.
\]
Using this in \eqref{E:intQ1} together with $|\ell_Q| \geq 1$, gives us
\[
\frac{1}{\deg Q}\int \log |Q(z)| \, d \mut(z)
\geq  t \log \capa(K) - \frac{1-t}{\deg \alpha} \bigg(\log \ell_{\alpha} - \log \minalp
+ \sum_{\gamma \in \conj(\alpha)} g_K(\gamma, \infty) \bigg),
\]
which completes the proof.
\end{proof}

We now apply Lemmas~\ref{L:lower_arith}~and~\ref{L:lower_geo} to prove Theorem~\ref{ct:suff}.

\begin{proof}[Proof of Theorem~\ref{ct:suff}]
Let $Q\in\Z[z]$ be nonzero and satisfy $Q(\alpha)\neq0$.
Combining the lower bounds in 
Lemmas~\ref{L:lower_arith}~and~\ref{L:lower_geo}, we obtain
\[
\frac{1}{\deg Q}\int \log |Q(z)| \, d \mu_t(z)
\geq t \log \capa(K)- \frac{1-t}{\deg \alpha} \Big(\log |\ell_{\alpha}| + \sum_{\gamma \in \conj(\alpha)}  
g_K(\gamma, \infty) - \log^+\minalp \Big).
\]
The expression on the right hand side can be rewritten as
\[
\log \capa(K) - \frac{1-t}{\deg \alpha} \Big (\deg \alpha \log \capa(K) + \log |\ell_{\alpha}|
+\sum_{\gamma \in \conj(\alpha)} g_K(\gamma, \infty) - \log^+\minalp \Big).
\]
We invoke Lemma~\ref{L:jensen} with $Q= P_{\alpha}$. Since $\deg P_{\alpha}=\deg \alpha$,
it follows that
\[
\frac{1}{\deg Q}\int \log |Q(z)| \, d \mu_t(z) \geq \log \capa(K)
- \frac{1-t}{\deg \alpha}\Big( \int \log |P_{\alpha}(z)| \, d {\equi}(z) - \log^+\minalp \Big).
\]
By assumption, the right hand side is nonnegative. 
Hence
$$\int \log |Q(z)| \, d \mu_t(z)\geq 0$$ 
whenever $Q(\alpha) \neq 0$.
\smallskip

Finally, suppose $Q(\alpha)=0$. As in the proof of Theorem~\ref{T:ns}, write
\[
Q(z)= P_{\alpha}(z)^k R(z),
\]
where $k \geq 1$ and $R \in \Z[z]$ with $R(\alpha) \neq 0$.
The integral corresponding to $P_\alpha$ is nonnegative by assumption, 
while that corresponding to $R$ is nonnegative by the preceding argument. Thus
\[
\int \log |Q(z)| \, d \mu_t(z)\geq 0.
\]
Therefore, $\mu_t$ satisfies all the inequalities in \eqref{E:infinite} and
by Result~\ref{R:APM_char}, $\mu_t$ is an arithmetic probability measure.
\end{proof}

\begin{proof}[Proof of Corollary~\ref{cor:faraway}]
Let $\alpha$ be an algebraic number and suppose that every $\gamma \in \conj(\alpha)$ satisfies 
$|\gamma|\geq r$, where $r > 0$, depending only on $K$, will be chosen sufficiently large.
Since $\nu_K$ is a probability measure,
\[
\int \log |z-w| \, d\nu_K(z)=  \log|w| + \int \log \left \vert 1-\frac{w}{z} \right \vert \, d\nu_K(z).
\]
Because $\nu_K$ has compact support, the second integral is $o(1)$ as $w \to \infty$. 
Moreover, since $K$ is compact,
\[
\log |z-w|= \log|w| + o(1),
\]
uniformly for $z\in K$ as $w\to\infty$.
Thus, by the definition of $\minalp$ and the factorization of $P_\alpha$,
\[
\frac{1}{\deg \alpha} \Big(\int \log |P_{\alpha}(z)| \, d {\equi}(z) - \log^+ \minalp \Big) \longrightarrow 0 \quad \text{as } r \to \infty.
\]
Since $\log \capa(K)>0$, we may choose $r$, depending only on $K$,
sufficiently large so that 
\[
\frac{1}{\deg \alpha} \Big(\int \log |P_{\alpha}(z)| \, d {\equi}(z) - \log^+ \minalp \Big) \leq \log \capa(K).
\]
Increasing $r$, if necessary, we may also ensure that $\minalp \geq 1$.
Consequently, for every $t \in [0,1]$,
\[
\int\log|P_\alpha(z)|\,d\mu_t(\alpha,K)(z)
\geq\log \minalp \geq0.
\]
Thus both conditions of Theorem~\ref{ct:suff} are satisfied for every $t\in[0,1]$.
Hence $\mu_t (\alpha, K)$ satisfies all the inequalities in \eqref{E:infinite}.
By Result~\ref{R:APM_char}, $\mu_t(\alpha,K)$ is an arithmetic
probability measure for every $t\in[0,1]$.
\end{proof}

\begin{remark}
The methods used in the proofs of Theorems~\ref{ct:suff}~and~\ref{ct:ns} extend to a 
more general class of measures than defined in \eqref{E:conv_measure}. 
Let $K, F \subset \C$ be compact sets 
symmetric about the real axis, and let $\alpha$ be an algebraic 
number such that 
$
\conj(\alpha) \cap F = \emptyset.
$
For each $t \in [0,1]$, define the probability measure
\[
\mu_t(\alpha, K, F):= t {\equi} + \frac{1-t}{\deg \alpha} \sum_{\gamma \in \conj(\alpha)} \harmF.
\]
The conclusions of Theorems~\ref{ct:suff}~and~\ref{ct:ns} remain valid, with appropriate modifications, for 
the measures $\mu_t(\alpha,K,F)$ in place of those considered there.
\end{remark}

\medskip

\section{Arithmetic probability measures on intervals}\label{S:interval}

This section proves a generalization of Theorem~\ref{ct:interval}, allowing arbitrary convex combinations of 
the equilibrium measure and harmonic measures with respect to arbitrary real points. 
We characterize when such measures satisfy all the inequalities in \eqref{E:infinite}.
Theorem~\ref{ct:interval} is a special case of Theorem~\ref{T:intval_z}.
We begin by introducing this class of measures.
\smallskip

Let $K \subset \R$ be a compact set, and let $y_1, \dots, y_n \in \R \setminus K$. 
For each vector $\textbf{\textit{t}}= (t_0, \dots, t_n)$ satisfying $0 \leq t_i \leq 1$ and $\sum_{i=0}^n t_i=1$, 
we consider the probability measure
\begin{equation}\label{E:conv_measure_gen}
\mut(\{y_i\}, K):= t_0 {\equi} +  \sum_{i=1}^n t_i \harmyi.
\end{equation}
Before characterizing when $\mut$ satisfies all the inequalities in \eqref{E:infinite}, we establish an 
auxiliary lemma concerning the minimum of a weighted sum of logarithmic functions.

\begin{lemma}\label{L:min}
Let $[a, b] \subset \R$ be a closed interval.
Let $y_i \in \R \setminus [a, b]$ and $t_i\geq 0$ for all $1\leq i\leq n$.
Consider the function $F:[a,b] \to \R$ given by
\[
F(x)=\sum_{i=1}^{n} t_i \log |x-y_i|,
\]
Then $F$ attains its minimum on $[a,b]$ at one of the endpoints. 
\end{lemma}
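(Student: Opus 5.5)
The plan is to show that $F$ is concave on $[a,b]$. A continuous concave function on a closed interval attains its minimum at an endpoint, which gives the claim.

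Since each $y_i$ lies outside $[a,b]$, the function $x\mapsto x-y_i$ has constant sign on $[a,b]$. Hence $\log|x-y_i|$ is smooth on $[a,b]$, including at the endpoints, and $F$ is continuous and smooth there. Differentiating twice, I will compute
\[
\frac{d^2}{dx^2}\log|x-y_i| = -\frac{1}{(x-y_i)^2}<0,
\]
so that
\[
F''(x)=-\sum_{i=1}^n \frac{t_i}{(x-y_i)^2}\le 0 \qquad (x\in[a,b]),
\]
using $t_i\ge 0$. Therefore $F$ is concave on $[a,b]$.

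To finish, I will write any $x\in[a,b]$ as $x=\lambda a+(1-\lambda)b$ with $\lambda\in[0,1]$. Concavity then gives
\[
F(x)\ge \lambda F(a)+(1-\lambda)F(b)\ge \min\{F(a),F(b)\},
\]
so the minimum of $F$ on $[a,b]$ is attained at $a$ or at $b$. If all $t_i=0$, then $F\equiv 0$ and the statement is trivial.

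There is no genuine obstacle here. The only point that needs care is that the $y_i$ may lie on either side of the interval. This is harmless, because the second derivative of $\log|x-y|$ is $-(x-y)^{-2}$ regardless of the sign of $x-y$, as long as $x\neq y$.
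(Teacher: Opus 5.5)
Your proof is correct and follows the paper's approach: you show $F''(x)=-\sum_i t_i/(x-y_i)^2\le 0$, so $F$ is concave, and then use that a concave function on a closed interval attains its minimum at an endpoint. Your explicit convex-combination step $F(x)\ge \lambda F(a)+(1-\lambda)F(b)\ge\min\{F(a),F(b)\}$ just writes out the final fact that the paper quotes without proof.
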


\begin{proof}
Since $y_i  \in \R \setminus [a, b]$, we have
\[
\frac{d^2}{dx^2}\log|x-y_i|=-\frac{1}{(x-y_i)^2}<0
\]
on $[a,b]$. Since $t_i\geq0$, each term $t_i \log |x-y_i|$ is concave.
It follows that $F$ is concave.
A concave function on a closed interval attains its minimum at an endpoint,
which proves the claim.
\end{proof}

The following theorem provides a finite characterization of the inequalities in \eqref{E:infinite} for the measures 
$\mut$ when the underlying interval has integer endpoints. 
In particular, it reduces the full collection of inequalities in \eqref{E:infinite} to two explicit integer polynomials:

\begin{theorem}\label{T:intval_z}
Let $K=[a, b]$ be a closed interval with $a, b \in \Z$ and $a<b$, and
let $\mut:=\mut(\{y_i\}, K)$ be the measure defined in \eqref{E:conv_measure_gen}.
Then $\mut$ satisfies all the inequalities in \eqref{E:infinite}
if and only if
\[\int \log |z-a| \, d \mut(z) \geq 0 \quad {\text{and}} \quad
\int \log |z-b| \, d \mut(z) \geq 0.\]
\end{theorem}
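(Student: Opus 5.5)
The forward implication is immediate: since $a,b\in\Z$, the polynomials $z-a$ and $z-b$ lie in $\Z[z]$. For the converse, I would follow the proof of Lemma~\ref{L:lower_geo}, with one change. In the interval case the minimum of the root-dependent term is computed exactly, not just bounded below, and it is attained at $a$ or $b$. Since the linear polynomials $z-a$ and $z-b$ realise that minimum, their two integrals control every other $Q\in\Z[z]$.

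\emph{Step 1: the formula for a general $Q$.} Let $Q\in\Z[z]$ be nonzero, with factorisation $Q=\ell_Q\prod_j(z-r_j)$. Apply Lemma~\ref{L:jensen} to $\nu_K$ and Lemma~\ref{L:harm} to each $\omega(y_i,K)$, and expand $\log|Q(y_i)|=\log|\ell_Q|+\sum_j\log|y_i-r_j|$. This should give
\[
\int\log|Q|\,d\mut=\log|\ell_Q|+\deg Q\cdot C+\sum_{j}H(r_j),
\]
where the constant and the root function are
\[
C:=t_0\log\capa(K)-\sum_{i=1}^n t_i\,g_K(y_i,\infty),
\qquad
H(z):=t_0\,g_K(z,\infty)+\sum_{i=1}^n t_i\bigl(\log|z-y_i|+g_K(z,y_i)\bigr).
\]
Lemma~\ref{L:harm} needs $Q(y_i)\neq0$, which can fail here: a $y_i$ may be rational and a root of $Q$. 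To cover this case I would use that $\log|z-y_i|+g_K(z,y_i)$ extends harmonically across $z=y_i$, as shown in the proof of Lemma~\ref{L:lower_geo}. I would also use that $r\mapsto\int\log|z-r|\,d\omega(y_i,K)(z)$ is continuous (indeed harmonic) off $K$. The single-root identity
\[
\int\log|z-r|\,d\omega(y_i,K)=\log|y_i-r|+g_K(r,y_i)-g_K(y_i,\infty)
\]
then extends by continuity to $r=y_i$, so the displayed formula holds for every nonzero $Q$.

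\emph{Step 2: minimising $H$.} The complement $\sph\setminus[a,b]$ is connected, and $K$ is regular because it is a nondegenerate continuum; this is the same argument as in the proof of Theorem~\ref{ct:ns}. Hence $g_K(\cdot,\infty)$ and $g_K(\cdot,y_i)$ vanish identically on $K$, so on $K$ the function $H$ reduces to
\[
H(x)=F(x):=\sum_{i=1}^n t_i\log|x-y_i|.
\]
Off $K$, $H$ is harmonic and tends to $+\infty$ at infinity; when $t_0=0$ this still holds because $\sum_i t_i=1$. It is bounded below and continuous up to $K$ by regularity, so the generalized minimum principle gives $H\ge\min_{[a,b]}F$ on all of $\C$. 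Lemma~\ref{L:min} says this minimum equals $\min\{F(a),F(b)\}$.

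\emph{Step 3: comparison.} Take $Q=z-a$ in Step 1. Here $\ell_Q=1$ and $H(a)=F(a)$, so $\int\log|z-a|\,d\mut=C+F(a)$; similarly $\int\log|z-b|\,d\mut=C+F(b)$. For an arbitrary nonzero $Q\in\Z[z]$ we have $\log|\ell_Q|\ge0$, and Step 2 bounds each $H(r_j)$ below by $\min\{F(a),F(b)\}$. Together these give
\[
\int\log|Q|\,d\mut\ \ge\ \deg Q\cdot\min\Bigl\{\int\log|z-a|\,d\mut,\ \int\log|z-b|\,d\mut\Bigr\}\ \ge0.
\]
This covers every nonzero $Q$ at once, including those vanishing at some $y_i$, so no factorisation argument is needed.

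The main obstacle is the bookkeeping in Step 1 when some $y_i$ is a root of $Q$: the continuity and removable-singularity argument has to be made carefully. The vanishing of the Green functions on all of $K$ needs justification too, but here it follows directly from the regularity of intervals.
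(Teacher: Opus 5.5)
Your proof is correct, and its core is the same as the paper's. You use the same decomposition $\int\log|Q|\,d\mut=\log|\ell_Q|+\deg Q\cdot C+\sum_j H(r_j)$ and the same function $H$, minimised through the regularity of $[a,b]$, growth at $\infty$, the minimum principle and Lemma~\ref{L:min}. You then compare with $z-a$ and $z-b$ exactly as the paper does.

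The one genuine difference is how you treat polynomials that vanish at some $y_i$.

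\textbf{The paper's route.} It first proves the bound only for $Q$ with $Q(y_i)\neq0$ for all $i$, so that Lemma~\ref{L:harm} applies exactly as stated. It does this for every $Q\in\C[z]$ with $|\ell_Q|\ge1$. It then writes $Q=R\prod_{i\in I}(z-y_i)$ and disposes of each linear factor by a pointwise estimate on $[a,b]$: $|x-y_i|\ge|x-a|$ when $y_i<a$, and $|x-y_i|\ge|x-b|$ when $y_i>b$.

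\textbf{Your route.} You extend the single-root identity of Lemma~\ref{L:harm} to $r=y_i$ by continuity. This uses that $-U^{\omega(y_i,K)}$ is harmonic off $K$ and that $\log|z-y_i|+g_K(z,y_i)$ has a removable singularity at $y_i$; the extension is valid. The minimum principle then automatically covers roots $r_j=y_i$, so a single inequality, $\int\log|Q|\,d\mut\ge\deg Q\cdot\min\{\int\log|z-a|\,d\mut,\int\log|z-b|\,d\mut\}$, handles every nonzero $Q$. Because you never divide by $\deg Q$, it also covers constant $Q$.

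\textbf{What each route buys.} Your argument is a little more robust. The paper's assertion that $R(y_i)\neq0$ tacitly assumes each $y_i$ is a simple root of $Q$ (and that the $y_i$ are distinct). This needs the obvious repair of factoring with multiplicity, whereas you never factor at all. In exchange, the paper's route needs no limiting argument, uses Lemma~\ref{L:harm} only in the form proved, and shows directly that $\int\log|z-y_i|\,d\mut\ge0$ from the two endpoint inequalities.
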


\begin{proof}
The forward implication is immediate, since the polynomials $z-a, z-b \in\Z[z]$.
For the converse, let $Q\in \C[z]$ be a polynomial with leading coefficient $\ell_Q$ 
satisfying $|\ell_Q|\geq1$, and suppose that $Q(y_i)\neq0$ for every $1\leq i\leq n$. Write
$Q$ in its factored form
$
Q(z)=\ell_Q\prod_{j=1}^{\deg Q}(z-r_j).
$
By using Lemmas~\ref{L:jensen}~and~\ref{L:harm}, we get
\begin{align}
\int \log |Q(z)| \, d \mut(z)
= &t_0 \Big(\log |\ell_Q| + \deg Q \log \capa(K) + \sum_{j=1}^{\deg Q} g_K(r_j, \infty) \Big)   \notag \\
&+\sum_{i=1}^n t_i \Big(  \log |Q(y_i)|- \deg Q g_K(y_i, \infty) 
+ \sum_{j=1}^{\deg Q} g_K(r_j, y_i) \Big). \notag
\end{align}
Since the dependence 
on $Q$ enters only through its leading coefficient and its roots, 
we isolate the root dependent part by defining
\[
H_{\textbf{\textit{t}}}(z):= t_0 g_K(z, \infty)+ \sum_{i=1}^n t_i \big(\log |z -y_i |+ g_K(z, y_i)\big).
\]
With this notation, the preceding identity becomes
\begin{equation}\label{E:intQ}
\frac{1}{\deg Q}\int \log |Q(z)| \, d \mut(z)
= t_0 \log \capa(K) - \sum_{i=1}^n t_i g_K(y_i, \infty) + \frac{\log|\ell_Q|}{\deg Q}
+\frac{1}{\deg Q} \sum_{j=1}^{\deg Q}  H_{\textbf{\textit{t}}}(r_j). 
\end{equation}

By an argument similar to that in the proof of Lemma~\ref{L:lower_geo},
the singularities of $H_{\textbf{\textit{t}}}$ at $y_i$ are removable, so 
$H_{\textbf{\textit{t}}}$ extends harmonically to $\C\setminus[a,b]$. 
Since $[a,b]$ is regular, this extension is continuous on $\C$.
Moreover,
\[
\lim_{z \to \infty} H_{\textbf{\textit{t}}}(z)\to+\infty,
\]
independent of the parameter vector ${\textbf{\textit{t}}}$.
Hence, the minimum of $H_{\textbf{\textit{t}}}$ is attained on $[a,b]$.
For $z\in[a,b]$, the Green function terms vanish, and thus
\[
H_{\textbf{\textit{t}}}(z)= \sum_{i=1}^n t_i \log |r -y_i |.
\]
By Lemma~\ref{L:min}, the minimum of $H_{\textbf{\textit{t}}}$ on $[a,b]$,
and hence on $\C$, is attained at one of the endpoints of $[a, b]$. 
Without loss of generality, suppose that this minimum is attained at $a$. 
Note that
\[
\int \log |z-a| \, d \mut(z)
= t_0 \log \capa(K) - \sum_{i=1}^n t_i g_K(y_i, \infty) + H_{\textbf{\textit{t}}}(a).
\]
Since $H_{\textbf{\textit{t}}}(a) \leq H_{\textbf{\textit{t}}}(z)$ for all $z\in\C$, 
comparing the above expression with \eqref{E:intQ} yields
\[
\frac{1}{\deg Q}\int \log |Q(z)| \, d \mut(z)\geq \int \log |z-a| \, d \mut(z).
\]
By hypothesis, the right-hand side is nonnegative. Hence
\[
\int \log |Q(z)| \, d \mut(z) \geq 0
\] 
whenever $Q\in\C[z]$ satisfies $Q(y_i)\neq0$ for all $1\leq i\leq n$ and $|\ell_Q|\geq1$. 
In particular, this applies to any $Q\in\mathbb Z[z]$ satisfying $Q(y_i)\neq0$ for all $1\leq i\leq n$.
\smallskip

Finally, suppose $Q\in\mathbb Z[z]$ and $Q(y_i)=0$ for some $i$. Let
$I=\{i:Q(y_i)=0\}$ and write
\[
Q(z)=R(z)\prod_{i\in I}(z-y_i).
\]
Since the factors $z-y_i$ are monic, $|\ell_R|=|\ell_Q|\geq1$, and
$R(y_i)\neq0$ for every $1\leq i \leq n$. Thus, the preceding argument applies to $R$.
For each $i\in I$, since $y_i \in \R \setminus [a,b]$,
\[
|z-y_i|\geq
\begin{cases}
|z-a|,& y_i<a,\\
|z-b|,& y_i>b,
\end{cases}
\qquad z\in[a,b].
\]
Thus, by the assumed integral inequalities at the endpoints,
\[
\int\log|z-y_i|\,d\mut(z)\geq0.
\]
Combining these inequalities gives
\[
\int\log|Q(z)|\,d\mut(z) \geq \int \log|R(z)| \,d\mut(z) 
+\sum_{i\in I} \int \log|z-y_i| \,d\mut(z) 
\geq0,
\]
which completes the proof.
\end{proof}

The argument above extends verbatim to finite disjoint unions of intervals with integer endpoints, 
yielding the following corollary.

\begin{corollary}
Let $K= \cup_{j=1}^m [a_j, b_j]$ be a finite disjoint union of closed intervals with $a_j, b_j \in \Z$
and $a_j<b_j$ for $1\leq j\leq m$.
Let $\mut:=\mut(\{y_i\}, K)$ be the measure defined in \eqref{E:conv_measure_gen}.
Then $\mut$ satisfies all the inequalities in \eqref{E:infinite}
if and only if for every $1\leq j\leq m$
\[
\int \log |z-a_j| \, d \mut(z) \geq 0 \quad \text{and} \quad \int \log |z-b_j| \, d \mut (z) \geq 0.
\]
\end{corollary}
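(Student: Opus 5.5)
The plan is to rerun the proof of Theorem~\ref{T:intval_z} with $K=\cup_{j=1}^m[a_j,b_j]$ in place of $[a,b]$. The forward implication is immediate, since each $z-a_j$ and $z-b_j$ lies in $\Z[z]$. For the converse, take $Q\in\C[z]$ with $|\ell_Q|\ge1$ and $Q(y_i)\neq0$ for all $i$. Lemmas~\ref{L:jensen} and~\ref{L:harm} give the identity \eqref{E:intQ} verbatim for $K$, with the same function
\[
H_{\textbf{\textit{t}}}(z)=t_0 g_K(z,\infty)+\sum_{i=1}^n t_i\big(\log|z-y_i|+g_K(z,y_i)\big).
\]
As in Lemma~\ref{L:lower_geo}, the singularities at the $y_i$ are removable. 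Since $\C\setminus K$ is connected, each $g_K(\cdot,y_i)$ is the Green function of the single domain $\sph\setminus K$, so $H_{\textbf{\textit{t}}}$ is harmonic on $\C\setminus K$. A finite union of nondegenerate intervals is regular, so $H_{\textbf{\textit{t}}}$ is continuous on $\C$, and it tends to $+\infty$ at $\infty$. By the minimum principle, its infimum over $\C$ is therefore attained on $K$. On $K$ all Green function terms vanish, leaving
\[
H_{\textbf{\textit{t}}}(x)=\sum_i t_i\log|x-y_i|.
\]

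The step needing care is the analogue of Lemma~\ref{L:min}, which requires each $y_i$ to lie outside the interval on which we minimise. Here $y_i\notin K$ may lie in a gap $(b_j,a_{j+1})$, but it still lies outside each individual $[a_j,b_j]$. So on each component $F(x)=\sum_i t_i\log|x-y_i|$ is concave, and applying Lemma~\ref{L:min} componentwise shows the minimum over $K$ is attained at some endpoint $e\in\{a_j,b_j\}$. Then
\[
\int\log|z-e|\,d\mut=t_0\log\capa(K)-\sum_i t_ig_K(y_i,\infty)+H_{\textbf{\textit{t}}}(e),
\]
and comparing this with \eqref{E:intQ} yields
\[
\frac{1}{\deg Q}\int\log|Q|\,d\mut\ \ge\ \int\log|z-e|\,d\mut\ \ge\ 0.
\]

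Finally, for $Q\in\Z[z]$ vanishing at some $y_i$, factor
\[
Q=R\prod_{i\in I}(z-y_i)
\]
as before; the argument above applies to $R$. For each removed factor, I need $\int\log|z-y_i|\,d\mut\ge0$. This is the one place where the single-interval comparison $|z-y_i|\ge|z-a|$ on $K$ no longer holds globally. Instead I use concavity again: $x\mapsto\log|x-y_i|$ restricted to each $[a_j,b_j]$ is concave, but that gives only a pointwise minimum, not an integral bound. So the better route is to note that $z-y_i$ is itself a polynomial with $|\ell|=1$ and $\deg=1$. Its single root $y_i$ gives
\[
\int\log|z-y_i|\,d\mut=t_0\log\capa(K)-\sum_k t_kg_K(y_k,\infty)+H_{\textbf{\textit{t}}}(y_i),
\]
interpreted via the continuous harmonic extension, and by the earlier minimum bound this is at least $\int\log|z-e|\,d\mut\ge0$. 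Justifying \eqref{E:intQ} at a root equal to a pole $y_i$, using the removable-singularity extension (a limiting version of Lemma~\ref{L:harm}), is the main technical point. Summing the contributions then finishes the proof.
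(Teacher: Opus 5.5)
Your proof is correct. Up to the last step it is the paper's proof of Theorem~\ref{T:intval_z} rerun on $K=\cup_j[a_j,b_j]$, with Lemma~\ref{L:min} applied on each component. The paper's own proof of the corollary is only the assertion that this argument ``extends verbatim''.

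You differ from the paper in how you handle the factors $z-y_i$, and here you have spotted something the paper glosses over. The single-interval proof uses the pointwise comparison $|z-y_i|\ge|z-a|$ or $|z-b|$ on $K$. When $y_i$ lies in a gap $(b_j,a_{j+1})$, no single endpoint $e$ satisfies $|z-y_i|\ge|z-e|$ on all of $K$.

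Your fix is sound, and the step you call the main technical point is immediate. Since $y_i\notin K$, the function $z\mapsto\int\log|z-\zeta|\,d\omega(y_i,K)(\zeta)$ is continuous at $y_i$. By \eqref{E:green_harmK} it equals $\log|z-y_i|+g_K(z,y_i)-g_K(y_i,\infty)$ for $z\neq y_i$, so you can let $z\to y_i$.

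This gives more than you use. With the extended $H_{\textbf{\textit{t}}}$, the identity \eqref{E:intQ} holds at every root, so $\frac{1}{\deg Q}\int\log|Q|\,d\mut\ge\int\log|z-e|\,d\mut$ for every $Q\in\C[z]$ with $|\ell_Q|\ge1$. No factorization is needed at all. This also removes a small slip, shared with the paper: $R(y_i)\neq0$ fails if $y_i$ is a multiple root of $Q$ or if the $y_i$ are not distinct.

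On the other hand, you dismissed concavity too quickly. Write $y_i=\lambda b_j+(1-\lambda)a_{j+1}$ with $\lambda\in(0,1)$. For $z\in K$ the quantities $z-b_j$ and $z-a_{j+1}$ never have opposite signs, so $|z-y_i|=\lambda|z-b_j|+(1-\lambda)|z-a_{j+1}|$ there. Concavity of $\log$ then yields the pointwise bound $\log|z-y_i|\ge\lambda\log|z-b_j|+(1-\lambda)\log|z-a_{j+1}|$ on $K$, which integrates to $\int\log|z-y_i|\,d\mut\ge0$.

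That elementary comparison is what makes the paper's ``verbatim'' claim literally true, and it avoids the limiting form of Lemma~\ref{L:harm}. Your route instead treats all roots uniformly through the global minimum of $H_{\textbf{\textit{t}}}$.
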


Observe that in the proof of Theorem~\ref{T:intval_z}, 
the assumption $a, b \in \Z$ is used only for necessity; the sufficiency argument holds without this 
assumption. The same observation applies to finite disjoint unions of intervals with arbitrary real endpoints, 
but we record it below only in the case of a single interval.

\begin{corollary}\label{C:realinterval}
Let $K=[a, b]$ be a closed interval with $a, b \in \R$ and $a<b$, and
let $\mut:=\mut(\{y_i\}, K)$ be the measure defined in \eqref{E:conv_measure_gen}.
If
\[\int \log |z-a| \, d \mut(z) \geq 0 \quad {\text{and}} \quad
\int \log |z-b| \, d \mut(z) \geq 0,\]
then $\mut$ satisfies all the inequalities in \eqref{E:infinite}.
\end{corollary}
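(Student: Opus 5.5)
The plan is to rerun the sufficiency half of the proof of Theorem~\ref{T:intval_z} and check that integrality of $a,b$ is never used there. Fix a nonzero $Q\in\Z[z]$. First I handle the case $Q(y_i)\neq 0$ for all $i$. Writing $Q=\ell_Q\prod_j(z-r_j)$, Lemmas~\ref{L:jensen} and \ref{L:harm} give the identity \eqref{E:intQ}. This identity expresses $\frac{1}{\deg Q}\int\log|Q|\,d\mut$ as a constant $t_0\log\capa(K)-\sum_i t_i g_K(y_i,\infty)$, plus $\log|\ell_Q|/\deg Q\ge 0$, plus the average of $H_{\textbf{\textit{t}}}$ over the roots of $Q$. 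None of this uses $a,b\in\Z$.

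Next I control $H_{\textbf{\textit{t}}}$. The singularities at the $y_i$ are removable, exactly as in Lemma~\ref{L:lower_geo}, so $H_{\textbf{\textit{t}}}$ is harmonic on $\C\setminus[a,b]$. It is continuous on $\C$ because an interval is regular, and it tends to $+\infty$ at $\infty$. The minimum principle therefore places its minimum on $[a,b]$. On $[a,b]$ the Green terms vanish, leaving $\sum_i t_i\log|x-y_i|$, and Lemma~\ref{L:min} puts the minimum at an endpoint, say $c\in\{a,b\}$. Applying \eqref{E:intQ} formally to the linear polynomial $z-c$ (monic, and $c\ne y_i$) shows that $\int\log|z-c|\,d\mut$ equals the same constant plus $H_{\textbf{\textit{t}}}(c)$. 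Hence
\[
\frac{1}{\deg Q}\int\log|Q|\,d\mut\ \ge\ \int\log|z-c|\,d\mut\ \ge 0
\]
by hypothesis. The only thing to notice is that Lemmas~\ref{L:jensen} and \ref{L:harm} hold for arbitrary complex polynomials, so $z-c\in\R[z]$ is fine even though it need not lie in $\Z[z]$.

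Then I treat $Q$ vanishing at some $y_i$. I factor $Q=R\prod_{i\in I}(z-y_i)$ with $|\ell_R|=|\ell_Q|\ge1$ and $R(y_i)\ne0$. The previous step applies to $R$, since it was really proved for complex polynomials with leading coefficient of modulus at least $1$. For each $i\in I$, the pointwise bound $|x-y_i|\ge|x-a|$ (if $y_i<a$) or $|x-y_i|\ge|x-b|$ (if $y_i>b$) on $[a,b]$ gives $\int\log|z-y_i|\,d\mut\ge0$ from the endpoint hypotheses. Summing these gives $\int\log|Q|\,d\mut\ge0$.

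The main obstacle is just bookkeeping: confirming that every step of the earlier argument is valid for real $a,b$. Integrality entered that proof only to make $z-a,z-b$ admissible test polynomials, which is needed for necessity. Regularity of $[a,b]$, Lemma~\ref{L:min}, and the pointwise comparison are all independent of arithmetic.
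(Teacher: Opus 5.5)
Your proposal is correct and is exactly the paper's argument: the paper justifies the corollary by observing that, in the proof of Theorem~\ref{T:intval_z}, integrality of $a,b$ is used only for necessity. Your bookkeeping verifies precisely this, including that Lemmas~\ref{L:jensen} and \ref{L:harm} apply to $z-c\in\R[z]$ and to a non-integral cofactor $R$ with $|\ell_R|\ge 1$.

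One small slip is inherited from the paper's text. The factorization $Q=R\prod_{i\in I}(z-y_i)$ need not give $R(y_i)\neq0$ when some $y_i$ is a repeated root of $Q$ or when the $y_i$ are not distinct. Instead, divide out each distinct value $y\in\{y_1,\dots,y_n\}$ with its full multiplicity $m_y$ as a root of $Q$; the same endpoint comparison then handles each factor $(z-y)^{m_y}$.
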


\begin{remark}
The above extension to intervals with arbitrary real endpoints, together with Theorems~\ref{ct:suff}~and~\ref{ct:ns},
could be particularly useful for closed intervals of the form
\[
[-2 \sqrt q, 2 \sqrt q],
\]
where $q$ is a power of a prime number.
Such intervals naturally arise in the study of Weil numbers and Frobenius eigenvalues. 
\smallskip

For an integer interval $K=[a,b]$, Theorem~\ref{T:intval_z} reduces all the inequalities in \eqref{E:infinite}
to those corresponding to the two polynomials $z-a$ and $z-b$. It would be interesting to determine, 
for each $q$, whether an analogous finite collection of integer polynomials can be found for the interval 
$[-2 \sqrt q, 2 \sqrt q].$
More generally, one could ask for such a finite reduction for arbitrary 
intervals with nonintegral endpoints.
\end{remark}
\medskip

\section*{Acknowledgments}
\noindent{The author would like to thank Norm Levenberg for interesting discussions
regarding the paper.
This work is supported by TÜBİTAK grant ARDEB-1001/124F370.}


\medskip

\begin{thebibliography}{88888888}

\bibitem[Bil97]{Bi} Yuri Bilu, {\em Limit distribution of small points on algebraic tori},
Duke Math. J. {\bf 89} (1997), no.~3, 465--476.

\bibitem[BMQS26]{BMQS:cgemh26}
José Ignacio Burgos Gil, Ricardo Menares, Binggang Qu and Martín Sombra, {\em Closing the gap around the essential minimum of height functions with linear programming},
arXiv reference: \href{https://arxiv.org/abs/2601.18978}{\texttt{arXiv:2601.18978}}.

\bibitem[Fek23]{Fekete:udvwbgmg23}
Michael Fekete, {\em \"{U}ber die Verteilung der Wurzeln bei gewissen algebraischen Gleichungen mit ganzzahligen Koeffizienten},
Math. Z. {\bf 17} (1923), no.~1, 228--249.

\bibitem[FS55]{FekeSzego:aeicwr55}
Michael Fekete and G\'{a}bor Szeg\"{o}, {\em On algebraic equations with integral coefficients whose roots belong to a given point set},
Math. Z. {\bf 63} (1955), 158--172.

\bibitem[LL26a]{LevLon:evoft24}
Norm Levenberg and Mayuresh Londhe, {\em Around Fekete’s theorem},
Potential Anal. {\bf} 64 (2026), no. 1, Paper No. 2, 18 pp.

\bibitem[LL26b]{LL:SSS26}
Norm Levenberg and Mayuresh Londhe, {\em On the Schur--Siegel--Smyth trace problem}, 
preprint.

\bibitem[NT25]{NT:cdmcav25}
Nikolai Nadirashvili and Michael Tsfasman, {\em Complete description of measures corresponding to Abelian varieties over finite fields},
Finite Fields Appl. {\bf 101} (2025), Paper No. 102543, 12 pp.


\bibitem[OS23a]{OrlSar:ldcai23}
Bryce Joseph Orloski and Naser Talebizadeh Sardari, {\em Limiting distributions of conjugate algebraic integers},
arXiv reference: \href{https://arxiv.org/abs/2302.02872}{\texttt{arXiv:2302.02872}}.


\bibitem[OS23b]{OrlSar:qcft23}
Bryce Joseph Orloski and Naser Talebizadeh Sardari, {\em A quantitative converse of Fekete's theorem},
arXiv reference: \href{https://arxiv.org/abs/2304.10021}{\texttt{arXiv:2304.10021}}.

\bibitem[Pri11]{Pritsker:crelle11}
Igor Pritsker, {\em Distribution of algebraic numbers},
J. reine angew. Math. {\bf 657} (2011), 57--80.

\bibitem[Ran95]{ransford:ptCp95}
Thomas Ransford, {\em Potential Theory in the Complex Plane}, LMS Student Texts {\bf 25},
Cambridge University Press, Cambridge (UK), 1995.

\bibitem[Rum99]{Ru} Robert Rumely, {\em On Bilu's equidistribution theorem}, 
Spectral problems in geometry and arithmetic, Contemp. Math. {\bf 237}, AMS (1999), 159--166.


\bibitem[ST24]{SaffTotik:lpwef97}
Edward B. Saff and Vilmos Totik, {\em Logarithmic Potentials with External Fields},
Second edition, Grundlehren Math. Wiss., 316,
Springer, Cham, 2024, 2024. xvi+594 pp.

\bibitem[Ser19]{Serre19:dadvpef}
Jean-Pierre Serre, {\em Distribution asymptotique des valeurs propres des endomorphismes de Frobenius},
Ast\'erisque No. 414, S\'eminaire Bourbaki, Vol. 2017/2018. Expos\'es 1136--1150 (2019). 

\bibitem[Smi24]{Smith:aicpd24}
Alexander Smith, {\em Algebraic integers with conjugates in a prescribed distribution}, 
Ann. of Math. (2) {\bf 200} (2024), no.\,1, 71--122.

\bibitem[Tsf19]{T:serre19}
Michael Tsfasman, {\em Serre's theorem and measures corresponding to abelian varieties over finite fields},
Mosc. Math. J. {\bf 19} (2019), no.\,4, 789--806.

\end{thebibliography}
\end{document}